\theoremstyle{plain}
\newtheorem{theorem}{Theorem}[section]
\newtheorem{lemma}[theorem]{Lemma}
\newtheorem{corollary}[theorem]{Corollary}
\newtheorem{proposition}[theorem]{Proposition}
\theoremstyle{definition}
\newtheorem{definition}[theorem]{Definition}
\theoremstyle{remark}
\newtheorem{remark}[theorem]{Remark}
\title{The collapsibility of CAT(0) square 2-complexes}
\author{Ioana-Claudia Laz\u{a}r}
\address{Politehnica University of Timi\c{s}oara,\\
Dept. of Mathematics,\\
Victoriei Square $2$, $300006$, Timi\c{s}oara,\\
Rom\^{a}nia}
\email{ioana.lazar@upt.ro}
\keywords{square $2$-complex, CAT(0) metric, elementary collapse, geodesic segment}
\subjclass{05C99, 05C75}
\thanks{The author was partially supported by the grant $346300$ for IMPAN from
the Simons Foundation and the matching $2015-2019$ Polish MNiSW fund. The author
was also partially supported by the grant $19/6-020/961-120/14$ of the Ministry for Science of the Republic of Srpska.}
\thanks{I thank Professor Tudor B\^{i}nzar for the careful reading of the paper and the useful remarks. I thank the anonymous referees for the helpful comments.}
\begin{document}

\begin{abstract}
  We give a sufficient condition for the collapsibility of finite square $2$-complexes.
  We show that any finite, CAT(0) square $2$-complex retracts to a point through CAT(0) subspaces.
\end{abstract}

\maketitle

\section{Introduction}

In this paper we investigate metric conditions which guarantee the
collapsibility of a finite, $2$-dimensional square complex.

The metric curvature condition we have in mind is given by the CAT(0)
inequality. A geodesic metric space is a CAT(0) space if geodesic
triangles are thinner than comparison triangles in the Euclidean space
(see \cite{bridson_1999,burago_2001,alex_1955,menger_1928}).
A $2$-dimensional polyhedral space is a space of nonpositive
curvature if and only if the link of each vertex does not contain a
subspace isometric to a circle of length less than $2 \pi$ (see
\cite{burago_2001}, chapter $4.2$, page $113$). Hence the standard
piecewise Euclidean metric structure on a $2$-dimensional simplicial
complex is nonpositively curved if and only if the link of each
vertex of the complex has girth at least $6$. The \textit{girth} of
a graph is defined as the minimum number of edges in a circuit.

Combinatorially, one can express curvature
using a condition, called $k$-systolicity ($k \geq 6$) which was introduced independently by Chepoi \cite{chepoi_2000} (under the name of bridged complexes), Januszkiewicz-{\' S}wi{\c a}tkowski \cite{JS1} and Haglund \cite{Hag}.
The theory of $7$-systolic groups, that is groups acting geometrically on $7$-systolic complexes, allowed to provide examples of highly dimensional Gromov hyperbolic groups (see \cite{JS0,JS1,O-chcg}). However, for groups acting geometrically on CAT(-1) cubical complexes or on $7$-systolic complexes, some very restrictive limitations
are known. For example, $7$-systolic groups are in a sense `asymptotically hereditarily aspherical', i.e.\ asymptotically they can not contain
essential spheres. This yields in particular that such groups are not fundamental groups of negatively curved manifolds of dimension
above two; see e.g. \cite{JS2,O-conn,O-ib,OS,O-ns}.
In \cite{O-sdn, chepoi_2009,BCCGO,ChaCHO} other conditions
of this type are studied. They form a way of unifying CAT(0) cubical and systolic theories.
Osajda introduced in \cite{O-8loc} another local combinatorial condition called \emph{$8$-location}. He showed that under the additional hypothesis
of local $5$-largeness, this condition implies Gromov hyperbolicity of the universal cover.
In \cite{L-8loc} we study of a version of $8$-location, suggested in \cite[Subsection 5.1]{O-8loc}.
This $8$-location says that homotopically trivial loops of length at most $8$ admit filling diagrams with one internal vertex.
In \cite{lazar_2015} we introduce another combinatorial curvature condition, called the $5/9$-condition, and we show that the complexes which fulfill it, are also Gromov hyperbolic.

The collapsibility of finite simplicial complexes was studied
before. White showed that a finite, strongly convex
$2$-complex, is collapsible (see \cite{white_1967}).
Corson-Trace proved further that a finite, simply
connected, $2$-dimensional simplicial complex that has the
$6$-property, collapses to a point (see \cite{corson_1998}).
In dimension above $2$, systolic simplicial complexes are also collapsible (see \cite{chepoi_2009,lazar_2012,lazar_2013}).
Crowley showed in \cite{crowley_2008} that a finite simplicial
complex of dimension $3$ or less endowed with the standard piecewise
Euclidean metric that is nonpositively curved, and satisfies a
technical condition, simplicially collapses to a point. She
constructed a CAT(0) $2$-complex by endowing the complex with
the corresponding piecewise Euclidean metric and requiring that each interior
vertex of the complex has degree at least $6$. The naturally
associated piecewise Euclidean metric on the $2$-complex becomes
then CAT(0). Crowley's result was extended by Adiprasito-Benedetti to all dimensions (see \cite{benedetti_2019}).
Using discrete Morse theory (see \cite{forman_1998,forman_2002}), they proved that every simplicial complex that is CAT(0) with a metric for which all vertex stars are convex, is collapsible.

It turns out that no
combinatorial condition is necessary to prove that a finite, CAT(0) simplicial
$2$-complex, is collapsible. A proof of this fact is given in \cite{lazar_2010_8} (chapter $3.1$, page $35$).
The aim of the present paper is to extend this result on square $2$-complexes.
In \cite{benedetti_2019} (Corollary $3.2.9$) it is shown that
every CAT(0) cube complex is collapsible. We give an alternative proof of the same result only in the $2$-dimensional case.

The main result of the paper states that a finite, CAT(0) square
$2$-complex retracts to a point through subspaces which are, at each
step of the retraction, CAT(0) spaces. When finding the new geodesic segments in the subspace obtained by performing an elementary collapse on a finite, CAT(0) square $2$-complex, we reduce the problem to the simplicial case. The proof for the fact that the subcomplex obtained by performing an elementary collapse on a finite CAT(0) square $2$-complex, remains non-positively curved, on the other hand, does not reduce to the simplicial case. Instead we argue on a square $2$-complex which is the novelty of the paper.

 In the infinite case the equivalent notion of collapsibility is called arborescent structure.
In \cite{corson_1998} it is shown that any locally finite, simply connected simplicial $2$-complex with the
$6$-property is a monotone union of a sequence of collapsible subcomplexes. We show that a similar result holds for
locally finite CAT(0) square $2$-complexes. This is a consequence of the fact that finite, CAT(0) square $2$-complexes are collapsible.

\section{Preliminaries}

We present in this section the notions we shall work with and the
results we shall refer to.

\begin{definition}

Let $(X,d)$ be a metric space. If $x,m,y$ are three points in $X$
such that $d(x,m) + d(m,y) = d(x,y)$, then we say that $m$
\emph{lies between} $x$ and $y$. We call $m$ the \emph{midpoint} of
$x$ and $y$ if $d(x,m) = d(m,y) = \frac{\textstyle 1}{\textstyle
2}d(x,y)$.

\end{definition}

\begin{definition}

Let $(X,d)$ be a metric space. $X$ is a \emph{convex} metric space
if for any two points $x,y$ in $X$, there exists at least one
midpoint $m$. $X$ is a \emph{strongly convex} metric space if for
any two points $x,y$ in $X$, there exists exactly one midpoint $m$.

\end{definition}

\begin{definition}
Let $(X,d)$ be a metric space and let $c : [a,b] \rightarrow X$ be a
path in $X$. The \emph{length} $l(c)$ of $c$ is defined by:
\begin{center}
$l(c) =$ $\underset{a = t_{0} \leq t_{1} \leq ... \leq t_{n} = b}
\sup$ $ \sum_{i=0}^{n-1}d(c(t_{i}), c(t_{i+1}))$,
\end{center}
where the supremum is taken over all possible partitions with $a =
t_{0} \leq t_{1} \leq ... \leq t_{n} = b$.
\end{definition}

\begin{definition}

Let $(X,d)$ be a metric space and let $x,y$ be two distinct points
in $X$. A \emph{segment} $c : [a,b] \rightarrow X$ in $X$ connecting
$x$ to $y$ is a path which has, among all paths joining $x$ to $y$ in
$X$, the shortest length.

\end{definition}

\begin{theorem}
Let $(X,d)$ be a metric space. Let $x$ and $y$ be two distinct
points in $X$.
\begin{enumerate}
\item A subset $S$ of $X$ containing $x$ and $y$
is a segment joining $x$ to $y$ if there exists a closed real line
interval $[a,b]$ and an isometry $c : [a,b] \rightarrow X$ such that
$c(a) = x$ and $c(b) = y$;
\item A path $c : [a,b]  \rightarrow X$
joining $x$ to $y$ is a segment from $x$ to $y$ if and only if $l(c)
= d(x,y)$.
\end{enumerate}
\end{theorem}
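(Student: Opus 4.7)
The plan is to derive both parts from a single preliminary inequality: for every path $c:[a,b]\to X$ joining $x$ to $y$ and every partition $a=t_0\le t_1\le\dots\le t_n=b$, iterated application of the triangle inequality gives $\sum_{i=0}^{n-1} d(c(t_i),c(t_{i+1}))\ge d(c(a),c(b))=d(x,y)$, so passing to the supremum yields $l(c)\ge d(x,y)$. I would record this observation at the start of the proof and then use it in both halves.

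For Part (1), I would first compute $l(c)$ exactly. Since $c:[a,b]\to X$ is an isometry, each term in a partition sum equals $t_{i+1}-t_i$, and the sum telescopes to $b-a$ independently of the partition; hence $l(c)=b-a$. The isometry also gives $d(x,y)=d(c(a),c(b))=b-a$, so $l(c)=d(x,y)$. By the preliminary inequality, every path $c'$ joining $x$ to $y$ then satisfies $l(c')\ge d(x,y)=l(c)$, so $c$ minimises length among such paths and its image $S=c([a,b])$ is a segment.

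For Part (2), the ``if'' direction is immediate from the preliminary inequality: if $l(c)=d(x,y)$, then no competing path $c'$ from $x$ to $y$ can have $l(c')<l(c)$, so $c$ is length-minimising and hence a segment. The ``only if'' direction uses, in addition, that the underlying metric is intrinsic (a length metric), which is the natural setting of the paper's CAT(0) polyhedral complexes; under this hypothesis $d(x,y)$ equals the infimum of lengths of paths from $x$ to $y$, and so a length-minimising path $c$ must satisfy $l(c)\le d(x,y)$, which combined with the preliminary inequality gives equality.

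The arguments are all routine; the only point worth flagging is the ``only if'' direction of Part (2), which quietly relies on $(X,d)$ being a length space. In a general metric space a shortest path need not realise the distance (for instance, on a circle with the chord metric the distance between antipodal points is $2$ while every path has length at least $\pi$), so I would make this hypothesis explicit at the start of the proof, noting that it is automatic in the CAT(0) square-complex setting that motivates the paper.
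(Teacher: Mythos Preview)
The paper does not actually prove this statement: immediately after the theorem it writes ``For the proof see \cite{alex_1955}, chapter II.2, page 76'' and moves on. So there is nothing to compare against except the bare citation.

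Your argument is correct and is the standard one. The preliminary inequality $l(c)\ge d(x,y)$ via the triangle inequality, the telescoping computation $l(c)=b-a=d(x,y)$ for an isometry in Part~(1), and the ``if'' direction of Part~(2) are all fine. Your observation about the ``only if'' direction is exactly right and worth emphasising: as stated, the theorem asserts that a shortest path must have length equal to $d(x,y)$, but this fails in a general metric space (your circle-with-chord-metric example is apt). The conclusion requires that $d$ be an intrinsic (length) metric, so that $d(x,y)=\inf_{c'} l(c')$; this is indeed the setting in Alexandrov's book and is automatic for the piecewise-Euclidean polyhedral complexes the present paper works with, but it is not stated as a hypothesis in the theorem as quoted. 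Making this explicit, as you propose, is the correct fix.
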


For the proof see \cite{alex_1955}, chapter II.2, page 76.

\begin{theorem}
Let $(X,d)$ be a complete metric space. There exists a segment (there exists a unique segment) in
$X$ between any two distinct
points $x , y$ in $X$ if and only if $X$ is a convex metric space (strongly convex metric space).
\end{theorem}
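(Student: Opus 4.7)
The plan is to prove both biconditionals in parallel, reducing segment existence/uniqueness to midpoint existence/uniqueness via the classical Menger iterated midpoint construction. The key observation is that an isometric parametrization of a segment from $x$ to $y$ sends the interval midpoint to a midpoint of $x,y$ in $X$, and conversely the entire isometry can be reconstructed from iterated midpoints at dyadic parameters. Completeness enters only in the backward direction, where it supplies the limits that allow the dyadic construction to extend to the full interval $[0,d(x,y)]$.

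For the forward directions, start from a segment joining $x$ and $y$. By Theorem 2.5(1) this segment is the image of an isometry $c:[a,b]\to X$ with $c(a)=x$ and $c(b)=y$; then $m := c(\tfrac{a+b}{2})$ satisfies $d(x,m)=d(m,y)=\tfrac{1}{2}d(x,y)$ and is a midpoint, establishing convexity. If moreover the segment is unique, suppose for contradiction that $x,y$ admit two distinct midpoints $m\ne m'$. Feeding each of these into the backward construction below as the first-stage midpoint yields two distinct segments between $x$ and $y$, contradicting uniqueness. Hence uniqueness of segments forces $X$ to be strongly convex.

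For the backward directions, fix distinct $x,y\in X$ and set $L := d(x,y)$. Build a map $c$ on the dyadic subset $D\subset [0,L]$ by first setting $c(0)=x$, $c(L)=y$, then choosing a midpoint $c(L/2)$ of $x$ and $y$, and inductively declaring $c\!\left((2k+1)L/2^{n+1}\right)$ to be a midpoint of $c(kL/2^n)$ and $c((k+1)L/2^n)$. An induction on $n$ shows that $c$ is an isometry on $D$. Because $D$ is dense in $[0,L]$ and $c$ is $1$-Lipschitz on $D$, the completeness of $X$ permits a unique continuous extension to an isometry $c:[0,L]\to X$, which is a segment by Theorem 2.5(1). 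Under the strong convexity hypothesis every midpoint choice in the construction is forced, so $c$ is uniquely determined on $D$ and hence on $[0,L]$; any other segment from $x$ to $y$, parametrized as an isometry, produces midpoints at each dyadic parameter which must match those of $c$ by strong convexity, so the two segments coincide on $D$ and then everywhere by continuity.

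The main obstacle is the inductive verification that the map $c$ preserves distances on \emph{all} pairs of dyadic parameters, not only on consecutive ones. At the step introducing a new midpoint $m$ of $c(p)$ and $c(q)$, one has by definition only $d(c(p),m)+d(m,c(q))=d(c(p),c(q))$ for the immediate pair $p,q$. Extending this to an arbitrary pair $u<v$ in $D$ requires chaining triangle inequalities along the dyadic partition between $u$ and $v$; the global equality $d(c(0),c(L))=L$, applied to the full chain from $0$ to $L$, forces every intermediate triangle inequality to be saturated, and a standard inductive unpacking then yields $d(c(u),c(v))=v-u$ for all $u<v$ in $D$.
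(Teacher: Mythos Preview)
The paper does not prove this theorem; it simply cites Menger's 1928 paper. Your argument is precisely the classical iterated-midpoint construction going back to Menger, and it is correct as written.

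One small slip of exposition: you say completeness enters only in the backward direction, but your forward uniqueness argument invokes the backward construction (to turn two distinct midpoints into two distinct segments), so completeness is implicitly used there as well. This is harmless since completeness is a standing hypothesis. If you prefer to keep the two directions cleanly separated, you can argue the forward uniqueness implication without the dyadic construction: given distinct midpoints $m\ne m'$ of $x,y$, concatenate a segment from $x$ to $m$ with one from $m$ to $y$ (both exist by hypothesis) to obtain a path of length $d(x,m)+d(m,y)=d(x,y)$, hence a segment by Theorem~2.5(2); doing the same through $m'$ yields a second segment, and the two are distinct because an isometric image of $[0,d(x,y)]$ can contain at most one midpoint of $x,y$.
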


For the proof see \cite{menger_1928}.

\begin{definition}

Let $(X,d)$ be a metric space. A \emph{geodesic path} joining $x \in
X$ to $y \in X$ is a path $c : [a,b] \to X$ such that $c(a) = x$,
$c(b) = y$ and $d(c(t), c(t')) = |t - t'|$ for all $t, t' \in
[a,b]$. The image $\alpha$ of $c$ is called a \emph{geodesic
segment} with endpoints $x$ and $y$.

\end{definition}

A \emph{geodesic metric space} $(X,d)$ is a metric space in which
every pair of points can be joined by a geodesic segment. We denote
any geodesic segment from a point $x$ to a point $y$ in $X$, by
$[x,y]$. We emphasize that any such geodesic segment is not
determined by its endpoints. Thus, without further assumptions on
$X$, there may be many geodesic segments joining $x$ to $y$.

A \emph{geodesic triangle} in $X$ consists of three points $p,q,r
\in X$, called \emph{vertices}, and a choice of three geodesic
segments $[p,q], [q,r], [r,p]$ joining them, called \emph{sides}.
Such a geodesic triangle is denoted by $\triangle (p,q,r)$. If a point $x \in X$ lies in the
union of $[p,q], [q,r]$ and $[r,p]$, then we write $x \in
\triangle$. A triangle $\overline{\triangle} =
\triangle(\overline{p},\overline{p},\overline{r})$ in
$\mathds{R}^{2}$ is called a \emph{comparison triangle} for
$\triangle = \triangle (p,q,r)$ if $d(p,q) =
d_{\mathds{R}^{2}}(\overline{p},\overline{q})$, $d(q,r) =
d_{\mathds{R}^{2}}(\overline{q},\overline{r})$ and $d(r,p) =
d_{\mathds{R}^{2}}(\overline{r},\overline{p})$. A point
$\overline{x} \in [\overline{q},\overline{r}]$ is called a
\emph{comparison point} for $x \in [q,r]$ if $d(q,x) =
d_{\mathds{R}^{2}}(\overline{q},\overline{x})$. The interior angle
of $\overline{\triangle} =
\triangle(\overline{p},\overline{p},\overline{r})$ at $\overline{p}$
is called the \emph{comparison angle} between $q$ and $r$ at $p$ and
it is denoted by $\overline{\angle}_{p} (q,r)$ (the comparison angle
is well-defined provided $q$ and $r$ are both distinct from $p$).

\begin{definition}
Let $(X,d)$ be a metric space and let $c : [0,a] \rightarrow X$ and
$c' : [0,a'] \rightarrow X$ be two geodesic paths with $c(0) =
c'(0)$. Given $t \in (0,a]$ and $t' \in (0,a']$, we consider the
comparison triangle $\overline{\triangle} (c(0),c(t),c'(t'))$ in
$\mathds{R}^{2}$ and the comparison angle $\overline{\angle}_{c(0)}
(c(t),c'(t'))$. The \emph{Alexandrov angle} between the geodesic paths $c$ and $c'$ is the number
$\angle(c,c') \in [0, \pi]$ defined by:
\begin{center}
$\angle(c,c') := \underset{t,t' \rightarrow 0} \limsup
\overline{\angle}_{c(0)} (c(t),c'(t')) = \underset{\varepsilon
\rightarrow 0} \lim \underset{ 0 < t,t' < \varepsilon} \sup
\overline{\angle}_{c(0)} (c(t),c'(t'))$.
\end{center}
\end{definition}

The Alexandrov angle between two geodesic segments which have a
common endpoint, is defined to be the Alexandrov angle between the
unique geodesics which issue from this point and whose images are
the given segments. Alexandrov angles in $\mathds{R}^{2}$ are equal
to the usual Euclidean angles.

\begin{remark}
The Alexandrov angle between the geodesic paths $c : [0,a]
\rightarrow X$ and $c' : [0,a'] \rightarrow X$ in a metric space
$(X,d)$ depends only on the germs of these paths at $0$. If $c'' :
[0,a''] \rightarrow X$ is any geodesic path for which there exists
$\varepsilon > 0$ such that $c''\mid_{[0,\epsilon]} =
c'\mid_{[0,\epsilon]}$, then the Alexandrov angle between $c$ and
$c''$ is the same as that between $c$ and $c'$.
\end{remark}

\begin{definition}
Let $(X,d)$ be a convex metric space. Let $c : [0,a] \rightarrow X$
and $c' : [0,a'] \rightarrow X$ be two geodesic paths with $c(0) =
c'(0) = p$ which have no other common points in the neighborhood of
$p$. The geodesic paths $c$ and $c'$ divide a sufficiently small
neighborhood of $p$ into two \emph{sectors} $U$ and $V$. We consider
in $U$ the geodesic paths $c_{1}, c_{2}, ..., c_{n}$, numbered
according to their position relative to $c$ and $c'$. We denote by
$\alpha_{0}, \alpha_{1}, ..., \alpha_{n}$ the Alexandrov angles
between $c$ and $c_{1}$, $c_{1}$ and $c_{2}$, ..., $c_{n}$  and
$c'$, respectively. The upper limit of the sum $\alpha_{0} + \alpha_{1} + ... +
\alpha_{n}$ for any geodesic paths $c_{i}$ in $U$, $1 \leq i \leq
n$, is called the \emph{Alexandrov angle of the sector} $U$.
\end{definition}

\begin{definition}
Let $(X,d)$ be a convex metric space and let $p$ be a point in $X$.
Let $U_{1}, ..., U_{n}$ be sectors around $p$ which form a full
neighborhood of $p$. We call the sum of the Alexandrov angles of the
sectors $U_{1}, ..., U_{n}$ in $X$, \emph{the full angle around the
point} $p$ in $X$.
\end{definition}

\begin{definition}
Let $(X,d)$ be a convex metric space. Let $\triangle (p,q,r)$ be a
geodesic triangle in $X$. Let $\alpha, \beta, \gamma$ be the
Alexandrov angles between the sides of $\triangle$. The
\emph{curvature of the geodesic triangle} $\triangle$ is defined by
$\omega(\triangle) = \alpha + \beta + \gamma - \pi$.
\end{definition}

\begin{definition}
Let $(X,d)$ be a convex metric space. Let $p$ be a point of $X$. Let
$\theta$ be the full angle around the point $p$. The
\emph{curvature at the point} $p$ is defined by $\omega(p) = 2\pi -
\theta$.
\end{definition}

\begin{theorem}

Let $(X,d)$ be a convex metric space and let $\triangle (p,q,r)$ be
a geodesic triangle in $X$ whose curvature equals zero. Then
$\triangle (p,q,r)$ is isometric to its comparison triangle
$\overline{\triangle} (\overline{p},\overline{q},\overline{r})$ in
$\mathds{R}^{2}$.

\end{theorem}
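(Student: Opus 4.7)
The plan is to derive the isometry $\triangle\cong\overline{\triangle}$ in two steps. First, I show that for every point $x$ in the interior of the side $[q,r]$ one has $d(p,x)=d_{\mathds{R}^{2}}(\overline{p},\overline{x})$, where $\overline{x}\in[\overline{q},\overline{r}]$ is the corresponding comparison point. Second, I use this information, applied inside the resulting flat sub-triangles, to recover every cross-distance between two points lying on distinct sides of $\triangle$, which gives the full isometry.

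To carry out the first step, fix $x$ in the interior of $[q,r]$; since $(X,d)$ is convex, a geodesic $[p,x]$ exists, splitting $\triangle$ into the sub-triangles $\triangle_{1}=\triangle(p,q,x)$ and $\triangle_{2}=\triangle(p,x,r)$. Alexandrov angles depend only on germs of geodesics, so the angle of $\triangle_{1}$ at $q$ is $\beta$, that of $\triangle_{2}$ at $r$ is $\gamma$, and the two opposite rays of $[q,r]$ meet at $x$ with Alexandrov angle $\pi$. The triangle inequality for Alexandrov angles then gives $\angle_{p}(q,x)+\angle_{p}(x,r)\geq\alpha$ and $\angle_{x}(p,q)+\angle_{x}(p,r)\geq\pi$, so that summing the two sub-excesses and using $\alpha+\beta+\gamma=\pi$ yields
\[
\omega(\triangle_{1})+\omega(\triangle_{2})\geq 0.
\]

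The principal obstacle is to prove the matching upper bound $\omega(\triangle_{1})+\omega(\triangle_{2})\leq\omega(\triangle)=0$, a subadditivity of the triangle-excess under subdivision by a geodesic from a vertex to the opposite side. This will require a careful analysis, using the $\limsup$ definition of the Alexandrov angle, of how the angles at $p$ and at $x$ in the two sub-triangles relate to those of the original triangle in a convex metric space. Once in hand, both displayed inequalities collapse to equalities, giving $\omega(\triangle_{1})=\omega(\triangle_{2})=0$ together with the tight angle relations $\angle_{p}(q,x)+\angle_{p}(x,r)=\alpha$ and $\angle_{x}(p,q)+\angle_{x}(p,r)=\pi$.

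Finally, I would glue the Euclidean comparison triangles $\overline{\triangle_{1}}$ and $\overline{\triangle_{2}}$ along their common side of length $d(p,x)$. From $\omega(\triangle_{i})=0$ and the tightness above, the Alexandrov angles of each sub-triangle coincide with the comparison angles of $\overline{\triangle_{i}}$; consequently the comparison angles at $\overline{x}$ sum to $\pi$ and those at $\overline{p}$ sum to $\overline{\alpha}$. The glued figure is then a planar triangle with the three prescribed side lengths of $\overline{\triangle}$, hence congruent to $\overline{\triangle}$ itself, which forces $d(p,x)=d_{\mathds{R}^{2}}(\overline{p},\overline{x})$. Iterating the same argument inside the now-flat sub-triangles $\triangle_{1},\triangle_{2}$ recovers the cross-distances required in the second step and yields the full isometry.
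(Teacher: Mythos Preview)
The paper does not give its own proof of this statement; it simply cites Alexandrov's monograph on the intrinsic geometry of convex surfaces (chapter~V.6, page~218). In the paper the result is only ever applied inside a CAT(0) space (Proposition~3.1), where it is the Flat Triangle Lemma.

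Your outline has a real gap at exactly the step you label the principal obstacle, and it is not a matter of missing routine details. Writing out the two sub-excesses gives
\[
\omega(\triangle_1)+\omega(\triangle_2)-\omega(\triangle)
=\bigl[\angle_p(q,x)+\angle_p(x,r)-\alpha\bigr]+\bigl[\angle_x(p,q)+\angle_x(p,r)-\pi\bigr],
\]
and the triangle inequality for Alexandrov (upper) angles makes both brackets nonnegative. Hence the excess is \emph{super}additive under a cevian subdivision, $\omega(\triangle_1)+\omega(\triangle_2)\ge\omega(\triangle)$; this is precisely the inequality you already derived in your first step. The ``matching upper bound'' you seek is the reverse inequality, and nothing in a bare convex metric space forces it: it would require both brackets above to vanish, i.e., additivity of angles at $p$ and at $x$ along the inserted geodesic, and that is exactly the kind of rigidity that needs a curvature hypothesis. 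Under CAT(0) one has $\omega(\triangle_i)\le 0$ for every triangle, which together with superadditivity and $\omega(\triangle)=0$ forces $\omega(\triangle_1)=\omega(\triangle_2)=0$; in Alexandrov's original setting of convex surfaces one instead uses the Gauss--Bonnet--type identification of excess with curvature measure. For an arbitrary convex metric space the statement as written is in fact too strong: on a smooth surface carrying regions of both signs of Gaussian curvature one can produce a geodesic triangle of zero total excess that is not isometric to a Euclidean triangle. So the subadditivity you hope to extract from the $\limsup$ definition cannot hold in that generality, and your final gluing step (which needs each individual Alexandrov angle to equal its comparison angle, not merely the three sums to agree) inherits the same defect.
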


For the proof we refer to \cite{alex_1955}, chapter V.6, page 218.

We define next CAT(0) spaces and present some of their basic
properties.

\begin{definition}
Let $(X,d)$ be a metric space. Let $\triangle (p,q,r)$ be a geodesic
triangle in $X$. Let $\overline{\triangle}
(\overline{p},\overline{q},\overline{r}) \subset \mathds{R}^{2}$ be
a comparison triangle for $\triangle$. The metric $d$ is
\emph{CAT(0)} if for all $x,y \in \triangle$ and all comparison
points $\overline{x}, \overline{y} \in \overline{\triangle}$, the
CAT(0) inequality holds:
$d(x,y) \leq d_{\mathds{R}^{2}}(\overline{x}, \overline{y})$.
A metric space $X$ is called a \emph{CAT(0) space} if it is a geodesic
space all of whose geodesic triangles satisfy the CAT(0) inequality.
A metric space $X$ is said to be \emph{of curvature $\leq 0$} (or
\emph{non-positively curved}) if it is locally a CAT(0) space, i.e.
for every $x \in X$ there exists $r_{x} > 0$ such that the ball
$B(x, r_{x})$, endowed with the induced metric, is a CAT(0) space.

\end{definition}

\begin{theorem}
Let $X$ be a CAT(0) space.
\begin{enumerate}
\item The balls in $X$ are convex (i.e., any
two points in such a ball are joined by a unique geodesic segment
and this segment is contained in the ball) and contractible;
\item (Approximate midpoints are close to midpoints.) For every $\varepsilon > 0$ there exists $\delta = \delta(\varepsilon) > 0$
such that if $m$ is the midpoint of a geodesic segment $[x,y]
\subset X$ and if
$\max \{ d(x,m'), d(y,m') \} \leq\frac{\textstyle 1}{\textstyle
2}d(x,y) + \delta$,
then $d(m,m') < \varepsilon$.
\end{enumerate}
\end{theorem}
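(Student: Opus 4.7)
My plan is to prove the two statements separately, each as a direct application of the CAT(0) inequality together with elementary Euclidean geometry.

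For part (1), I would first establish that any two points in a CAT(0) space are joined by a \emph{unique} geodesic; this follows by applying the CAT(0) inequality to a degenerate triangle whose two sides are two hypothetical distinct geodesics sharing the same endpoints, forcing them to coincide. Once uniqueness is in hand, fix a ball $B(c,r)$ and two points $x,y \in B(c,r)$. Let $z$ be any point on the unique geodesic $[x,y]$, and consider a comparison triangle $\overline{\triangle}(\bar c,\bar x,\bar y)$ in $\mathds{R}^2$. The comparison point $\bar z \in [\bar x,\bar y]$ lies in the Euclidean ball of radius $r$ around $\bar c$ because Euclidean balls are convex, so $d(c,z) \leq d_{\mathds{R}^2}(\bar c,\bar z) \leq r$, which shows $z \in B(c,r)$. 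For contractibility, I would define a straight-line homotopy $H \colon B(c,r) \times [0,1] \to B(c,r)$ by sending $(x,t)$ to the point at distance $(1-t)d(c,x)$ from $c$ along the unique geodesic from $c$ to $x$. The continuity of $H$ reduces to the fact that geodesics in a CAT(0) space depend continuously on their endpoints, which is itself a routine consequence of the CAT(0) inequality applied to suitable comparison triangles.

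For part (2), my plan is to lift the problem to the comparison plane. I consider the geodesic triangle $\triangle(x,y,m')$ in $X$ and a comparison triangle $\overline{\triangle}(\bar x,\bar y,\bar m')$ in $\mathds{R}^2$. Since $m$ is the midpoint of $[x,y]$, its comparison point $\bar m$ is the midpoint of the Euclidean segment $[\bar x,\bar y]$. By the CAT(0) inequality applied at $m$ and $m'$, one has $d(m,m') \leq d_{\mathds{R}^2}(\bar m,\bar m')$. The Euclidean median (Apollonius) identity in the triangle with vertices $\bar x,\bar y,\bar m'$ gives $d_{\mathds{R}^2}(\bar m,\bar m')^{2} = \tfrac{1}{2}d(x,m')^{2} + \tfrac{1}{2}d(y,m')^{2} - \tfrac{1}{4}d(x,y)^{2}$. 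By hypothesis, each of $d(x,m')$ and $d(y,m')$ is bounded above by $\tfrac{1}{2}d(x,y) + \delta$; substituting and simplifying bounds the right-hand side by $d(x,y)\,\delta + \delta^{2}$. Choosing $\delta$ small enough in terms of $\varepsilon$ (with $d(x,y)$ playing its implicit role) then yields $d(m,m') < \varepsilon$.

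The main obstacle I anticipate is ensuring continuity of the contraction homotopy in part (1): one really needs continuous dependence of geodesics on their endpoints, which is a genuine step beyond mere uniqueness and itself requires a careful argument using the CAT(0) inequality to compare a geodesic joining perturbed endpoints with the original one. Once this is granted, the remaining arguments are direct calculations; in particular, the $\delta$ versus $\varepsilon$ bound in part (2) drops out essentially for free from the Euclidean parallelogram law, and Euclidean convexity handles the containment assertion in part (1).
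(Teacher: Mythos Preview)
The paper does not actually prove this theorem; it simply cites Bridson--Haefliger, Chapter~II.1, page~160. Your sketch is essentially the standard argument found in that reference (uniqueness via a degenerate comparison triangle, ball convexity via a comparison triangle with apex the centre, contractibility via geodesic retraction and continuous dependence of geodesics on endpoints, and part~(2) via the Euclidean median/parallelogram identity), so in terms of approach there is nothing to contrast.

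One genuine caveat on part~(2): the statement as reproduced in the paper omits a hypothesis present in Bridson--Haefliger, namely an a~priori bound $d(x,y)\leq\lambda$, with $\delta=\delta(\lambda,\varepsilon)$. Your computation $d(m,m')^{2}\leq d(x,y)\,\delta+\delta^{2}$ is correct and in fact exposes why the extra parameter is needed: without any control on $d(x,y)$ no uniform $\delta(\varepsilon)$ exists, already in $\mathds{R}^{2}$ (take $x,y$ far apart and $m'$ at fixed height above the midpoint). You notice this yourself when you write ``with $d(x,y)$ playing its implicit role''; just be aware that the claim as literally stated cannot be proved, and that your argument proves the correct version with $\delta$ depending on both $\varepsilon$ and a bound on $d(x,y)$.
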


For the proof we refer to \cite{bridson_1999}, chapter II.1, page 160.

In CAT(0) spaces, angles exist in the
following strong sense.

\begin{theorem}
Let $X$ be a CAT(0) space and let $c : [0,a] \rightarrow X$ and $c'
: [0,a'] \rightarrow X$ be two geodesic paths issuing from the same
point $c(0) = c'(0)$. Given $t \in (0,a]$ and $t' \in (0,a']$, let
$\overline{\triangle}(c(t),c(0),c'(t'))$ be a comparison triangle in
$\mathds{R}^{2}$ for $\triangle(c(t),c(0),c'(t'))$.
The comparison angle $\overline{\angle}_{c(0)}(c(t),c'(t'))$ is a non-decreasing
function of both $t,t' \geq 0$ and the Alexandrov angle
$\angle(c,c')$ is equal to $\lim _{t,t' \rightarrow 0}
\overline{\angle}_{c(0)} (c(t),c'(t')) = \lim _{t \rightarrow
0}\overline{\angle}_{c(0)} (c(t),c'(t))$. Hence \begin{center}
$\angle(c,c') = \underset{t \rightarrow 0} \lim $ \emph{2} $\arcsin
\frac{1}{2t} d(c(t),c'(t))$ \end{center}
\end{theorem}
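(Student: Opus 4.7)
The plan is to establish the theorem in three stages corresponding to its three assertions, with the CAT(0) inequality applied to nested comparison triangles as the main lever throughout.

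First, I prove monotonicity of the function $(t, t') \mapsto \overline{\angle}_{c(0)}(c(t), c'(t'))$. Fix $t'$ and pick $t_{1} < t_{2}$ in $(0, a]$. Let $\overline{\triangle}(\overline{p}, \overline{q}, \overline{r})$ be the comparison triangle for $\triangle(c(0), c(t_{2}), c'(t'))$. Since $c(t_{1})$ lies on the geodesic $[c(0), c(t_{2})]$ at distance $t_{1}$ from $c(0)$, its comparison point $\overline{x}$ on $[\overline{p}, \overline{q}]$ satisfies $d_{\mathds{R}^{2}}(\overline{p}, \overline{x}) = t_{1}$. The CAT(0) inequality yields $d(c(t_{1}), c'(t')) \leq d_{\mathds{R}^{2}}(\overline{x}, \overline{r})$. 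The Euclidean sub-triangle $\triangle(\overline{p}, \overline{x}, \overline{r})$ has sides of lengths $t_{1}$, $t'$, and at least $d(c(t_{1}), c'(t'))$, while its angle at $\overline{p}$ coincides with $\overline{\angle}_{c(0)}(c(t_{2}), c'(t'))$. Comparing via the law of cosines with the comparison triangle of $\triangle(c(0), c(t_{1}), c'(t'))$, whose angle at the image of $c(0)$ is $\overline{\angle}_{c(0)}(c(t_{1}), c'(t'))$ and whose opposite side has length exactly $d(c(t_{1}), c'(t'))$, gives $\overline{\angle}_{c(0)}(c(t_{1}), c'(t')) \leq \overline{\angle}_{c(0)}(c(t_{2}), c'(t'))$. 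Symmetry in $t'$ yields joint monotonicity.

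Second, I upgrade the limsup in the definition of the Alexandrov angle to a genuine limit. By joint monotonicity, for every $\varepsilon > 0$,
\begin{equation*}
\sup_{0 < t, t' < \varepsilon} \overline{\angle}_{c(0)}(c(t), c'(t')) = \overline{\angle}_{c(0)}(c(\varepsilon), c'(\varepsilon)),
\end{equation*}
so $\angle(c, c') = \lim_{\varepsilon \to 0} \overline{\angle}_{c(0)}(c(\varepsilon), c'(\varepsilon))$. Moreover, monotonicity in each argument separately gives the squeeze
\begin{equation*}
\overline{\angle}_{c(0)}(c(\min(t, t')), c'(\min(t, t'))) \leq \overline{\angle}_{c(0)}(c(t), c'(t')) \leq \overline{\angle}_{c(0)}(c(\max(t, t')), c'(\max(t, t'))),
\end{equation*}
and since the outer terms share a common limit, the joint limit exists and equals $\lim_{t \to 0} \overline{\angle}_{c(0)}(c(t), c'(t))$. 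Third, the explicit formula follows from the law of cosines applied in the isosceles comparison triangle of $\triangle(c(0), c(t), c'(t))$, whose two sides through $\overline{c(0)}$ have length $t$:
\begin{equation*}
d(c(t), c'(t))^{2} = 2t^{2}\bigl(1 - \cos \overline{\angle}_{c(0)}(c(t), c'(t))\bigr) = 4 t^{2} \sin^{2}\!\bigl(\overline{\angle}_{c(0)}(c(t), c'(t))/2\bigr),
\end{equation*}
so $\overline{\angle}_{c(0)}(c(t), c'(t)) = 2 \arcsin\bigl(d(c(t), c'(t))/(2t)\bigr)$, and letting $t \to 0$ via the previous step yields the claimed formula.

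The main obstacle is the monotonicity step: one must correctly nest the comparison triangle of $\triangle(c(0), c(t_{1}), c'(t'))$ with the Euclidean sub-triangle of the comparison triangle of $\triangle(c(0), c(t_{2}), c'(t'))$, since these are a priori different planar triangles, and only CAT(0) (not an intrinsic feature of the comparison construction) gives the required inequality between the corresponding third sides. Once this nested-triangle comparison is in place, the remaining arguments reduce to elementary use of the law of cosines, the half-angle identity, and monotone limit arguments.
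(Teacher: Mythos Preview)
The paper does not give its own proof of this theorem; it simply cites Bridson--Haefliger, chapter~II.3, page~184. Your argument is correct and is essentially the standard proof found there: the CAT(0) inequality applied to the comparison point of $c(t_{1})$ on the side $[\overline{p},\overline{q}]$ of the larger comparison triangle, together with the law of cosines, gives monotonicity; monotonicity turns the $\limsup$ into a genuine limit along the diagonal; and the half-angle identity in the isosceles comparison triangle gives the explicit $\arcsin$ formula.

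One cosmetic remark: the displayed equality $\sup_{0<t,t'<\varepsilon}\overline{\angle}_{c(0)}(c(t),c'(t'))=\overline{\angle}_{c(0)}(c(\varepsilon),c'(\varepsilon))$ tacitly uses continuity of the comparison angle in $(t,t')$ (to pass from the open box to its corner) and assumes $\varepsilon\le\min(a,a')$. Both are harmless, but you may want to say so explicitly.
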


For the proof see \cite{bridson_1999}, chapter II.3, page 184.

Let $p,x,y$ be points of a metric space $X$ such that $p \neq x$ and
$p \neq y$. If there are unique geodesic segments $[p,x]$ and
$[p,y]$, then we write $\angle_{p}(x,y)$ to denote the Alexandrov
angle between these segments.

\begin{theorem}
Let $X$ be a metric space. The following conditions are equivalent:
\begin{enumerate}
\item $X$ is a CAT(0) space;
\item for every geodesic triangle $\triangle (p,q,r)$ in $X$ and for
every point $x \in [q,r]$, the following inequality is satisfied by
the comparison point $\overline{x} \in [\overline{q}, \overline{r}]
\subset \overline{\triangle}(p,q,r) \subset \mathds{R}^{2}$:
$d(p,x) \leq d(\overline{p}, \overline{x})$;
\item the Alexandrov angle between the sides of any geodesic
triangle in $X$ with distinct vertices is not greater than the angle
between the corresponding sides of its comparison triangle in
$\mathds{R}^{2}$.
\end{enumerate}
\end{theorem}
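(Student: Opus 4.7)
The plan is to prove $(1) \Rightarrow (2) \Rightarrow (1)$ together with $(1) \Leftrightarrow (3)$, using as the main tool the preceding theorem, which identifies the Alexandrov angle between two geodesics issuing from a common point as the monotone limit of their comparison angles.

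The implication $(1) \Rightarrow (2)$ is immediate by specializing the CAT(0) inequality to $y := p$, whose own comparison point is $\overline{p}$. For $(2) \Rightarrow (1)$, take $x \in [q,r]$ and $y \in [p,q]$ (the case where $x,y$ lie on a common side is trivial, since a side is a geodesic). Apply $(2)$ to $\triangle(p,q,r)$ and $x$ to obtain $d(p,x) \leq d_{\mathds{R}^{2}}(\overline{p},\overline{x})$. Let $\overline{\triangle}'(\overline{p}',\overline{q}',\overline{x}')$ be the comparison triangle of the sub-triangle $\triangle(p,q,x)$; the Euclidean law of cosines, applied with the common adjacent side lengths $d(p,q),d(q,x)$ in $\overline{\triangle}'$ and in the sub-triangle $\overline{p}\,\overline{q}\,\overline{x}$ of $\overline{\triangle}$, forces the angle at $\overline{q}'$ to be no larger than the angle at $\overline{q}$. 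Applying $(2)$ again, this time to $\triangle(p,q,x)$ and $y \in [p,q]$, yields $d(x,y) \leq d_{\mathds{R}^{2}}(\overline{x}',\overline{y}')$, and a second use of the law of cosines with common side lengths $d(q,x),d(q,y)$ and the ordered opening angles at $\overline{q},\overline{q}'$ concludes $d_{\mathds{R}^{2}}(\overline{x}',\overline{y}') \leq d_{\mathds{R}^{2}}(\overline{x},\overline{y})$, establishing the full CAT(0) inequality.

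For $(1) \Rightarrow (3)$, parametrize $[p,q]$ and $[p,r]$ by geodesics $c,c'$ issuing from $p$. The CAT(0) inequality applied to $\triangle(p,q,r)$ with the pair $c(t),c'(t')$ gives $d(c(t),c'(t')) \leq d_{\mathds{R}^{2}}(\overline{c(t)},\overline{c'(t')})$; translating this via the law of cosines at $\overline{p}$, where $\overline{c(t)},\overline{c'(t')}$ lie at distances $t,t'$ from $\overline{p}$ on the sides of $\overline{\triangle}$, one obtains $\overline{\angle}_{c(0)}(c(t),c'(t')) \leq \overline{\angle}_{\overline{p}}(\overline{q},\overline{r})$. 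The previous theorem lets us pass to the limit $t,t' \to 0$ to conclude $\angle(c,c') \leq \overline{\angle}_{\overline{p}}(\overline{q},\overline{r})$. For the converse $(3) \Rightarrow (2)$, given $x \in [q,r]$, split $\triangle(p,q,r)$ at $x$ into the geodesic sub-triangles $\triangle(p,q,x)$ and $\triangle(p,x,r)$, with respective comparison triangles $\overline{\triangle}'$ and $\overline{\triangle}''$. At $x$ the geodesic subsegments $[x,q]$ and $[x,r]$ have opposite germs, so $\angle_{x}(q,r) = \pi$; the standard triangle inequality for Alexandrov angles then yields $\angle_{x}(p,q) + \angle_{x}(p,r) \geq \pi$. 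Hypothesis $(3)$ forces the comparison angles of $\overline{\triangle}'$ and $\overline{\triangle}''$ at $\overline{x}'$ and $\overline{x}''$ to sum to at least $\pi$, and the planar Alexandrov lemma, applied to the two comparison sub-triangles glued along their common edge of length $d(p,x)$, gives the desired inequality $d(p,x) \leq d_{\mathds{R}^{2}}(\overline{p},\overline{x})$.

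The step I expect to be the main obstacle is the planar Alexandrov lemma underlying both $(2) \Rightarrow (1)$ and $(3) \Rightarrow (2)$: this elementary but delicate Euclidean statement, which compares the distance in a single triangle with the distances in two triangles glued along a common edge whose opposite angles sum to $\pi$, is precisely what bridges the angle-based and distance-based characterizations of CAT(0). It requires careful use of the law of cosines together with the monotonicity of distance under variation of the opening angle. Once this planar comparison is in hand, the remaining pieces reduce to direct substitutions into the CAT(0) inequality and the monotone limit provided by the preceding theorem.
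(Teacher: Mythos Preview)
Your outline is correct and is essentially the standard argument from Bridson--Haefliger, Proposition~II.1.7, which is exactly what the paper does: it gives no proof of its own but refers the reader to \cite{bridson_1999}, chapter~II.1, page~161. So there is nothing to compare; you have simply written out the cited proof.

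Two minor remarks. First, in $(2)\Rightarrow(1)$ you only treat the configuration $x\in[q,r]$, $y\in[p,q]$; the remaining placements of $x,y$ on two distinct sides follow by relabeling the vertices, and the case where $x,y$ lie on the same side is trivial, so nothing is missing, but it is worth saying so. Second, in $(3)\Rightarrow(2)$ you invoke the triangle inequality for Alexandrov angles to get $\angle_{x}(p,q)+\angle_{x}(p,r)\geq\pi$; this is Proposition~I.1.14 in Bridson--Haefliger and is not stated in the present paper, so if you want a self-contained write-up you should either cite it or include the short proof. Your identification of Alexandrov's Lemma (Lemma~\ref{2.47} here) as the key planar input for $(3)\Rightarrow(2)$ is exactly right; note, however, that in your $(2)\Rightarrow(1)$ argument you do not actually use that lemma but rather two direct applications of the law of cosines, so the ``main obstacle'' paragraph slightly overstates its role.
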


For the proof see \cite{bridson_1999}, chapter II.1, page 161.

\begin{theorem}

Any CAT(0) space is contractible; in particular it is simply
connected.

\end{theorem}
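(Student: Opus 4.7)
The plan is to construct an explicit contraction of $X$ to a chosen basepoint by sliding every point along the unique geodesic that joins it to the basepoint. The key input is the uniqueness of geodesics in CAT(0) spaces (which already appeared, for balls, in the earlier theorem, and which holds globally because any two geodesics with common endpoints bound a degenerate triangle whose comparison triangle in $\mathds{R}^2$ is a segment, forcing the two geodesics to coincide by the CAT(0) inequality).

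Concretely, fix $x_0 \in X$. For each $x \in X$ let $\gamma_x \colon [0,1] \to X$ denote the unique constant-speed geodesic with $\gamma_x(0)=x_0$ and $\gamma_x(1)=x$, and define
\[
H \colon X \times [0,1] \longrightarrow X, \qquad H(x,t) \,=\, \gamma_x(1-t).
\]
Then $H(x,0)=x$ and $H(x,1)=x_0$, so it only remains to verify that $H$ is continuous. Once continuity is established, $H$ is a strong deformation retraction of $X$ onto $\{x_0\}$, so $X$ is contractible, and the second assertion (simple connectedness) follows because any loop $\sigma \colon S^1 \to X$ can be contracted to $x_0$ via $(s,t)\mapsto H(\sigma(s),t)$.

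The main technical step, and the only place where any real work is needed, is the continuity of $H$. I would establish it through the following CAT(0) comparison estimate: for any $x,y \in X$ and any $t \in [0,1]$,
\[
d\bigl(\gamma_x(t),\gamma_y(t)\bigr) \,\leq\, t\,d(x,y).
\]
To see this, consider the geodesic triangle $\triangle(x_0,x,y)$ and a comparison triangle $\overline{\triangle}(\overline{x_0},\overline{x},\overline{y})$ in $\mathds{R}^2$. The comparison points $\overline{\gamma_x(t)} \in [\overline{x_0},\overline{x}]$ and $\overline{\gamma_y(t)} \in [\overline{x_0},\overline{y}]$ lie at the same fractional distance $t$ from $\overline{x_0}$, so by elementary Euclidean geometry (similar triangles with apex $\overline{x_0}$) one has $d_{\mathds{R}^2}(\overline{\gamma_x(t)},\overline{\gamma_y(t)}) = t\,d_{\mathds{R}^2}(\overline{x},\overline{y}) = t\,d(x,y)$. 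Applying the CAT(0) inequality to the pair of comparison points gives the displayed estimate.

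With this in hand, continuity of $H$ at $(x,t)$ is immediate from the triangle inequality: for $(y,s)$ close to $(x,t)$,
\[
d\bigl(H(y,s),H(x,t)\bigr) \,\leq\, d\bigl(\gamma_y(1-s),\gamma_x(1-s)\bigr) + d\bigl(\gamma_x(1-s),\gamma_x(1-t)\bigr) \,\leq\, (1-s)\,d(x,y) + |s-t|\,d(x_0,x),
\]
and both terms tend to $0$. The main (mild) obstacle is precisely this continuity verification; everything else is formal once uniqueness of geodesics is in place. I would close the argument by recording that the contraction $H$ indeed retracts $X$ to $\{x_0\}$, hence $X$ is contractible, which in turn implies $\pi_1(X)=0$.
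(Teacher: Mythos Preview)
Your proof is correct and is essentially the standard geodesic-contraction argument. The paper does not supply its own proof of this statement; it simply refers the reader to Bridson--Haefliger, chapter II.1, page 161, where exactly this argument appears: fix a basepoint, retract along the unique geodesics, and obtain continuity from the CAT(0) comparison estimate $d(\gamma_x(t),\gamma_y(t)) \leq t\,d(x,y)$ (which is the convexity of the distance function recorded two theorems later in the paper). So your approach coincides with the referenced one.
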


For the proof we refer to \cite{bridson_1999}, chapter II.1, page 161.

\begin{theorem}
Let $(X,d)$ be a CAT(0) space. Then the distance function $d : X
\times X \rightarrow \mathds{R}$ is convex and strongly convex.
\end{theorem}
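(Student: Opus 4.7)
By convexity of the distance function I mean that, for any two geodesic paths $c, c' : [0,1] \to X$ parametrized proportional to arc length, the function $t \mapsto d(c(t), c'(t))$ is convex on $[0,1]$. Strong convexity refers to the fact, in the sense of the metric-space definition given in Section 2, that the midpoint of any two points of $X$ exists and is unique; this is exactly the unique-geodesic part of the ``balls are convex'' theorem quoted above, so it can be treated as a direct consequence. The substantive content is therefore the convexity inequality
\[
d(c(t), c'(t)) \leq (1-t)\, d(c(0), c'(0)) + t\, d(c(1), c'(1)).
\]

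My approach is to apply the CAT(0) inequality to two geodesic triangles sharing a common diagonal of the ``quadrilateral'' with vertices $A = c(0)$, $B = c(1)$, $C = c'(1)$, $D = c'(0)$. I fix a geodesic segment $[A,C]$ and, for each $t \in [0,1]$, set $p_t = c(t)$, $p'_t = c'(t)$, and let $q_t$ be the point on $[A,C]$ with $d(A, q_t) = t\, d(A,C)$. Applying the equivalent characterization of CAT(0) to $\triangle(A,B,C)$, the Euclidean comparison points $\overline{q_t} \in [\overline{A},\overline{C}]$ and $\overline{p_t} \in [\overline{A},\overline{B}]$ satisfy $d_{\mathds{R}^{2}}(\overline{q_t}, \overline{p_t}) = t\, d(B,C)$, and hence $d(q_t, p_t) \leq t\, d(B,C)$. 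Applying the same tool to $\triangle(A,C,D)$, viewing $q_t$ at parameter $t$ on $[\overline{A},\overline{C}]$ and $p'_t$ at parameter $t$ on $[\overline{D},\overline{C}]$ measured from $D$, one finds $d_{\mathds{R}^{2}}(\overline{q_t}, \overline{p'_t}) = (1-t)\, d(A,D)$, so $d(q_t, p'_t) \leq (1-t)\, d(A,D)$. The ordinary triangle inequality then gives
\[
d(p_t, p'_t) \leq d(p_t, q_t) + d(q_t, p'_t) \leq t\, d(B,C) + (1-t)\, d(A,D),
\]
which is the desired convexity. Strong convexity is then immediate from the uniqueness of geodesics in a CAT(0) space, which was recorded earlier.

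The main obstacle I foresee is the bookkeeping on the second triangle: one must parametrize the side $[D,C]$ from $D$ to $C$ (not the reverse) so that $q_t$ plays the role of a ``parameter-$t$'' point on both triangles, and so that the comparison computation in $\triangle(A,C,D)$ produces the factor $(1-t)$ rather than $t$. Once this orientation is fixed, the two CAT(0) inequalities and the triangle inequality combine cleanly. A minor additional care is needed for the boundary cases $t \in \{0,1\}$ and when some of $A,B,C,D$ coincide, but these are all handled by the convention that the corresponding comparison triangle degenerates to a segment, in which the CAT(0) inequality becomes a trivial triangle inequality.
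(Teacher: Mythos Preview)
Your argument is correct and is precisely the standard proof: the quadrilateral-splitting via a diagonal $[A,C]$, the two applications of the CAT(0) comparison (using similarity in the Euclidean comparison triangles from vertices $\overline{A}$ and $\overline{C}$ respectively), and the final triangle inequality are exactly the steps in Bridson--Haefliger, Proposition~II.2.2, which is what the paper cites in lieu of a proof. Your treatment of strong convexity as the uniqueness of midpoints/geodesics, already recorded in the paper as a consequence of the CAT(0) inequality, is also correct, so there is nothing to add.
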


For the proof see \cite{bridson_1999}, chapter II.2, page 176 and chapter II.1, page 160.

\begin{theorem}
Let $X$ be a complete connected metric space. If $X$ is simply
connected and of curvature $\leq 0$, then $X$ is a CAT(0) space.
\end{theorem}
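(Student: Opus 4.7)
The plan is to prove this as the metric Cartan--Hadamard theorem: the local CAT(0) property propagates to a global one, using completeness to build geodesics and simple connectivity to globalize the CAT(0) inequality. The argument divides into two stages: first, construct a globally defined geodesic segment between any two points, and then verify the CAT(0) inequality for arbitrary geodesic triangles.

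For the first stage, fix $x,y \in X$. Since $X$ is locally CAT(0) it is locally path-connected, so together with connectedness one may join $x$ to $y$ by a path whose compact image meets only finitely many CAT(0) balls $B(p_{i}, r_{p_{i}})$ witnessing non-positive curvature. Inside each such ball unique midpoints exist, and the approximate-midpoint estimate for CAT(0) spaces combined with completeness of $X$ ensures that an iterated midpoint scheme converges to a local geodesic segment. Concatenating these segments along an overlapping chain of balls yields a local geodesic from $x$ to $y$. Simple connectivity then upgrades this to global uniqueness: any two local geodesics from $x$ to $y$ bound a bigon contractible in $X$, and the local convexity of the distance function in CAT(0) balls forces a shortening homotopy to collapse the two paths onto each other. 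The resulting unique local geodesic is length-minimizing, so we may denote it $[x,y]$.

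For the second stage, consider a geodesic triangle $\triangle(p,q,r)$ and its comparison triangle $\overline{\triangle}(\overline{p},\overline{q},\overline{r})$ in $\mathds{R}^{2}$. The essential tool is Alexandrov's patching lemma: if a geodesic triangle decomposes into two geodesic subtriangles sharing a cevian, and each subtriangle satisfies the CAT(0) inequality, then so does the original. I would subdivide $\triangle(p,q,r)$ by drawing cevians from $p$ to a sufficiently fine partition of $[q,r]$, whose existence is guaranteed by the first stage. A Lebesgue-number argument applied to the compact image of $\triangle$ under the covering by CAT(0) balls shows that after fine enough refinement every subtriangle lies in a single CAT(0) ball, and therefore satisfies the CAT(0) inequality by hypothesis. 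Iterated application of the patching lemma then propagates CAT(0) back to $\triangle(p,q,r)$, and since the triangle was arbitrary, $X$ is CAT(0).

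The main obstacle will be justifying Alexandrov's patching lemma. It reduces to a planar computation: gluing two Euclidean comparison triangles along the shared cevian produces a planar quadrilateral, and the CAT(0) inequality for the combined triangle follows exactly when the two Alexandrov angles on either side of the cevian at the interior vertex sum to at least $\pi$. This angle estimate is in turn a consequence of the local CAT(0) inequality at the interior vertex combined with the angle-monotonicity theorem for CAT(0) spaces recalled earlier. A subsidiary issue is the continuous dependence of the cevians on their endpoints, needed so that the fan of subtriangles fills $\triangle$ without gaps; this follows from the (strong) convexity of the distance function on CAT(0) balls also recorded above.
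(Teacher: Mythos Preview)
The paper does not prove this statement; it simply cites Bridson--Haefliger, chapter II.4, page 194, as it does for the other background theorems in the preliminaries. So there is no in-paper proof to compare against. That said, your sketch has a genuine gap in the second stage.

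You propose to subdivide $\triangle(p,q,r)$ by drawing cevians from $p$ to a fine partition of $[q,r]$ and then assert that a Lebesgue-number argument makes every subtriangle lie in a single CAT(0) ball. This cannot work: every subtriangle in a fan from $p$ still has $p$ as a vertex, so two of its sides have lengths close to $d(p,q)$ and $d(p,r)$. No matter how finely you partition $[q,r]$, the diameter of each sliver stays bounded below by roughly $\min\{d(p,q),d(p,r)\}$ and will not fit into an a~priori small CAT(0) ball. The patching lemma therefore never gets off the ground. Repairing this would require a genuine mesh of small triangles, but Alexandrov's lemma glues only two triangles along a common cevian, so reassembling a mesh needs an additional inductive scheme you have not supplied. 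Your first stage is also underspecified: concatenating local geodesics obtained in overlapping balls does not by itself yield a local geodesic at the junction points, and the ``shortening homotopy'' you invoke for uniqueness is exactly the hard part---it is where simple connectedness must interact with the local convexity in a controlled way, not a one-line consequence of it.

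For comparison, the argument in Bridson--Haefliger II.4 avoids triangle subdivision entirely. One builds, for a basepoint $p$, the space $\widetilde{X}_{p}$ of local geodesics issuing from $p$, endows it with a length metric so that the exponential map $\exp\colon \widetilde{X}_{p}\to X$ is a local isometry, proves directly that $\widetilde{X}_{p}$ is CAT(0), and then uses completeness and simple connectedness to show $\exp$ is a bijective local isometry, hence an isometry. The globalization is thus achieved through covering-space theory rather than through patching comparison triangles.
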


For the proof we refer to \cite{bridson_1999}, chapter II.4, page 194.

Alexandrov's Lemma, given below, will be referred to frequently when
showing the main result of the paper.

\begin{lemma}\label{2.47}
Consider four distinct points $A, B, B', C$ in the Euclidean plane.
Suppose that $B$ and $B'$ lie on opposite sides of the line through
$A$ and $C$.
Consider the geodesic triangles $\triangle = \triangle(A, B, C)$ and
$\triangle' = \triangle(A, B', C)$. Let $\alpha, \beta, \gamma$
($\alpha', \beta', \gamma'$) be the angles of $\triangle$
($\triangle'$) at the vertices $A, B, C$ ($A, B', C$).

Let $\overline{\triangle}$ be a triangle in $\mathds{R}^{2}$ with
vertices $\overline{A}, \overline{B}, \overline{B}'$ such that $
d(\overline{A}, \overline{B}) = d(A, B)$, $d(\overline{A},
\overline{B}') = d(A, B')$ and $d(\overline{B}, \overline{B}') =
d(B, C) + d(C, B')$. Let $\overline{C}$ be the point of
$[\overline{B}, \overline{B}']$ with $d(\overline{B}, \overline{C})
= d(B, C)$. Let $\overline{\alpha}, \overline{\beta},
\overline{\beta}'$ be the angles of $\overline{\triangle}$ at the
vertices $\overline{A}, \overline{B}, \overline{B}'$.

If $\gamma + \gamma' \geq \pi$ then,
$d(B,C) + d(B', C) \leq d(B,A) + d(B', A)$. Also $\overline{\alpha} \geq \alpha + \alpha', \overline{\beta} \geq
\beta,  \overline{\beta}' \geq \beta'$ and $d(\overline{A},
\overline{C}) \geq d(A, C)$.

If $\gamma + \gamma' \leq \pi$ then,
$d(B,C) + d(B', C) \geq d(B,A) + d(B', A)$.
Also, $\overline{\alpha} \leq \alpha + \alpha', \overline{\beta} \leq
\beta,  \overline{\beta}' \leq \beta'$ and  $d(\overline{A},
\overline{C}) \leq d(A, C)$.

Any one equality is equivalent to the others, and occurs if and only
if $\gamma + \gamma' = \pi$.
\end{lemma}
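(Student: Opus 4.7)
The plan is to reduce the lemma to two applications of the Euclidean hinge theorem (SAS inequality), after first disposing of the boundary case $\gamma+\gamma'=\pi$ by an isometric identification. Realize $\triangle(A,B,C)$ and $\triangle(A,B',C)$ glued along the common edge $AC$ in $\mathds{R}^{2}$ with $B,B'$ on opposite sides. When $\gamma+\gamma'=\pi$, the angles at $C$ force $B,C,B'$ collinear with $C\in[B,B']$ and $d(B,B')=d(B,C)+d(C,B')$, so $\triangle(A,B,B')$ has exactly the three side lengths prescribed for $\overline{\triangle}$; by SSS-congruence an isometry sends $A,B,C,B'$ to $\overline{A},\overline{B},\overline{C},\overline{B}'$, turning every asserted inequality into an equality.

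\textbf{Key inequality for $\gamma+\gamma'>\pi$.} The central step is to show $d(\overline{A},\overline{C})>d(A,C)$ by unfolding about $C$. Rotate $\triangle(A,B',C)$ around $C$ by the angle $\pi-(\gamma+\gamma')$ so that the image $B''$ of $B'$ lies on the ray opposite to $CB$; then $B,C,B''$ are collinear. Two applications of the law of cosines at $C$ (for the angles $\gamma'$ and $\pi-\gamma$) give
\[
d(A,B'')^{2}-d(A,B')^{2}=4\,d(A,C)\,d(C,B')\cos\tfrac{\gamma+\gamma'}{2}\cos\tfrac{\gamma-\gamma'}{2},
\]
which is strictly negative since $\cos\tfrac{\gamma+\gamma'}{2}<0$ and $|\gamma-\gamma'|<\pi$. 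Identifying $B,C,B''$ with $\overline{B},\overline{C},\overline{B}'$, both $A$ and $\overline{A}$ lie on the circle of radius $d(A,B)$ about $\overline{B}$, but $d(\overline{A},\overline{B}')=d(A,B')>d(A,B'')$; a short coordinate check shows that moving from $A$ along this circle away from $\overline{B}'$ strictly increases the distance to $\overline{C}$, yielding the claim.

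\textbf{Remaining inequalities and equality case.} With $d(\overline{A},\overline{C})>d(A,C)$ in hand, the Euclidean hinge theorem applied to $\triangle(A,B,C)$ and $\overline{\triangle}(\overline{A},\overline{B},\overline{C})$ (common sides $AB,BC$), and separately to $\triangle(A,B',C)$ and $\overline{\triangle}(\overline{A},\overline{B}',\overline{C})$ (common sides $AB',B'C$), gives $\overline{\beta}>\beta$ and $\overline{\beta}'>\beta'$. For $\overline{\alpha}$ I would compare $\triangle(A,B,B')$ with $\overline{\triangle}$, sharing the sides $AB,AB'$; the strict triangle inequality $d(B,B')<d(B,C)+d(C,B')=d(\overline{B},\overline{B}')$ in $\triangle(B,C,B')$ gives $\overline{\alpha}>\angle BAB'$, and since $B,B'$ lie on opposite sides of $AC$ with $\alpha+\alpha'=2\pi-(\beta+\beta')-(\gamma+\gamma')\le\pi$, this angle equals $\alpha+\alpha'$. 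The inequality $d(B,C)+d(B',C)\le d(B,A)+d(B',A)$ is the triangle inequality in $\overline{\triangle}$. The case $\gamma+\gamma'<\pi$ is mirror-symmetric: the sign of $\cos\tfrac{\gamma+\gamma'}{2}$ flips and every inequality reverses. Since every implication above is strict whenever $\gamma+\gamma'\ne\pi$, any single equality in the conclusion forces $\gamma+\gamma'=\pi$, whereupon the boundary case produces them all. The main obstacle is pinpointing the sign of the unfolding defect $\cos\gamma+\cos\gamma'$ and converting it into the $d(\overline{A},\overline{C})$-vs.-$d(A,C)$ comparison; the rest is a mechanical deployment of the hinge theorem.
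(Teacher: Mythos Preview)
The paper does not give its own proof of this lemma; it simply cites Bridson--Haefliger, chapter I.2, page 25. So there is nothing in the paper to compare your argument against. Your route---straighten the hinge at $C$ by rotating $\triangle(A,B',C)$ until $B,C,B''$ are collinear, read off the sign of $d(A,B'')^{2}-d(A,B')^{2}$ from the law of cosines, convert this into $d(\overline{A},\overline{C})>d(A,C)$ via a circle--and--ray monotonicity observation, and then push the remaining angle comparisons through the hinge theorem---is a correct and standard proof of Alexandrov's Lemma, close in spirit to the argument in Bridson--Haefliger.

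Two small points worth tightening. First, you call on $\overline{A}$ before you know $\overline{\triangle}$ exists; in the case $\gamma+\gamma'>\pi$ it is cleaner to observe first that
\[
d(B,C)+d(C,B')=d(B,B'')\le d(A,B)+d(A,B'')<d(A,B)+d(A,B'),
\]
which simultaneously yields the first asserted inequality and the triangle inequality needed to build $\overline{\triangle}$. Second, your ``mirror-symmetric'' treatment of $\gamma+\gamma'<\pi$ hides an asymmetry: there one may have $\alpha+\alpha'>\pi$, so the identification $\angle BAB'=\alpha+\alpha'$ fails; the desired inequality $\overline{\alpha}\le\alpha+\alpha'$ is then immediate from $\overline{\alpha}\le\pi$, but the reasoning is not literally the mirror image of the first case. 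Neither point affects the validity of your argument.
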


For the proof see \cite{bridson_1999}, chapter I.2, page 25.

\begin{definition}
The unit $n$-cube $I^{n}$ is the $n$-fold product $[0,1]^{n}$; it is isometric to a cube in the Euclidean $n$-space
with edges of length one. By convention, $I^{0}$ is a point.
A \emph{cubical} \emph{complex}  $K$ is the quotient of a disjoint
union of cubes $X = \bigcup_{\Lambda}I^{n_{\lambda}}$ by an
equivalence relation $\sim$. The restrictions $p_{\lambda} :
I^{n_{\lambda}} \rightarrow K$ of the natural projection $ p : X
\rightarrow K = X| _{\sim}$ are required to satisfy:
\begin{enumerate}
\item for every $\lambda \in \Lambda$, the map $p_{\lambda}$ is
injective;
\item if $p_{\lambda}( I^{n_{\lambda}}) \bigcap p_{\lambda'}( I^{n_{\lambda'}}) \neq
\emptyset$ then there is an isometry $h_{\lambda, \lambda'}$ from a
face $T_{\lambda} \subset I^{n_{\lambda}}$ onto a face $T_{\lambda'}
\subset I^{n_{\lambda'}}$ such that $p_{\lambda}(x) =
p_{\lambda'}(x')$ if and only if $x' = h_{\lambda, \lambda'}(x)$.
\end{enumerate}

\end{definition}

In other words, $K$ is a cubical complex if and only if each of its
cells $C_{\lambda}$ is isometric to a cube $I^{n_{\lambda}}$, each
of the maps $p_{\lambda}$ is injective, and the intersection of any
two cells in $K$ is empty or a single face.
We call a $2$-dimensional cubical complex a \emph{square} complex.

\begin{definition}
 Let $K$ be a square complex. Let $\sigma$
be a $2$-cell of $K$ with vertices at the points $a,b,c,d$. The \emph{curvature}
of $\sigma$ is equal to $\omega(\sigma) = [\angle_{a}(b,d) + \angle_{b}(a,d) + \angle_{d}(a,b)] + [\angle_{c}(b,d) + \angle_{b}(c,d) + \angle_{d}(b,c)] - 2 \pi$.

\end{definition}

\begin{definition}
Let $K$ be a square $2$-complex and let $\alpha$ be an $i$-cell
of $K$, $0 \leq i \leq 2$. If $\beta$ is a $k$-dimensional face of $\alpha, k < i$ but not of
any other cell in $K$, then we say there is an \emph{elementary
collapse} from $K$ to $K \setminus \{\alpha, \beta\}$. If $K = K_{0}
\supseteq K_{1} \supseteq ... \supseteq K_{n} = L$ are square
complexes such that there is an elementary collapse from $K_{j-1}$
to $K_{j}$, $1 \leq j \leq n $, then we say that $K$
\emph{collapses} to $L$.
\end{definition}

\begin{definition}
A \emph{locally finite} space is a topological space in which every point has a finite neighborhood.
\end{definition}

\begin{definition}
Let $K$ be a locally finite square complex. We say $K$ has an {\it arborescent structure}
if it is a monotone union $\cup_{n=1}^{\infty}L_{n}$ of a sequence of collapsible subcomplexes $L_{n}$.
\end{definition}

\begin{definition}
Let $K$ be an $n$-dimensional square complex. A $k$-dimensional
subcomplex $K'$ of $K$ is called a \emph{spine} of $K$ if $K$
collapses to $K'$ for any $k < n$.
\end{definition}

\begin{definition}
A point $a$ of an $n$-dimensional square complex $K$ is an
\emph{interior point} of $K$ if it is contained in a subspace $U$ of $K$ which is homeomorphic to an $n$-ball $B^{n}$ of finite radius. Otherwise we call $a$ an \emph{exterior point} of
$K$.
\end{definition}

\begin{definition}
Let $K$ be a square complex and let $e$ be an edge of $K$.
We denote by
$i(e)$, the \emph{initial} vertex of $e$, by $t(e)$, the \emph{terminus} of $e$. A finite sequence $
e_{0}e_{2}...e_{n}$ of edges in $K$ such that
$t(e_{i}) = i(e_{i+1})$ for all $0 \leq i \leq n-1$, is called an
\emph{edge-path} in $K$.
\end{definition}

\begin{definition}
 A subcomplex $L$ of a square complex $K$ is called \emph{full} (in $K$) if any cell of $K$ spanned by a set of vertices in $L$, is a cell of $L$.
\end{definition}

\begin{definition}
We call a square complex $K$ \emph{flag}
if any set of vertices is included in a face of $K$ whenever each
pair of its vertices is contained in a face of $K$.
\end{definition}

\section{Collapsing a CAT(0) square 2-complex}

This section provides a metric characterization of collapsible square
$2$-com\-ple\-xes. We show that finite, CAT(0) square $2$-complexes are collapsible.
Namely, they retract to a point through CAT(0)
subspaces. Similar results are obtained in \cite{lazar_2010_8} on
finite CAT(0) simplicial $2$-complexes.

We start investigating the collapsibility of finite, CAT(0) square $2$-complexes by characterizing the curvature at the interior points of such complex.
Besides, in the following proposition we show that finite, CAT(0) square $2$-complexes have a $2$-cell with a free
$1$-dimensional face.

\begin{proposition}
Let $K$ be a finite square $2$-complex. If $|K|$
admits a CAT(0) metric $d$, then:

\begin{enumerate}

\item $|K|$ has curvature $\leq 0$ at any of its interior points;

\item $K$ has a $2$-cell with a free $1$-dimensional face.

\end{enumerate}

\end{proposition}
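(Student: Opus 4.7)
The proposition has two parts, and I would treat them by quite different means: (1) is a purely local computation at each topological type of interior point, while (2) is a global existence statement that I would attack by a farthest-point/convexity argument in the compact CAT(0) space $|K|$.

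For (1), split into three cases according to the open cell that contains $p$. If $p$ lies in the interior of a $2$-cell, a neighborhood of $p$ is a Euclidean disk, so $\omega(p)=0$. If $p$ lies in the interior of an edge $e$, the requirement that $p$ have a neighborhood homeomorphic to $B^{2}$ forces exactly two squares to meet along $e$; the two $\pi$-angles at $p$ then give $\theta(p)=2\pi$ and $\omega(p)=0$. If $p$ is a vertex, interiority forces $L(p)$ to be homeomorphic to $S^{1}$, and the CAT(0) hypothesis (via the girth condition on links recorded in the introduction) gives that this circle has length at least $2\pi$, whence $\theta(p)\geq 2\pi$ and $\omega(p)\leq 0$.

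For (2), I would argue by contradiction. Suppose no $2$-cell of $K$ has a free $1$-face; equivalently, every edge of $K$ that lies in some $2$-cell lies in at least two. Iteratively remove pendant vertices (those whose link is a single isolated point) together with their dangling edges. The resulting subcomplex $K'$ is finite, contains the same $2$-cells as $K$, still satisfies the no-free-face hypothesis, and inherits the CAT(0) metric: a $K$-geodesic between two retained points never detours into a trimmed tree, so distances and geodesic triangles in $K'$ agree with those in $K$. Fix $x_{0}\in K'$; by compactness $f:=d(x_{0},\cdot)$ attains a maximum at some $y_{0}\in K'$. Convexity of $f$ and the first variation formula in CAT(0) spaces force $\angle_{y_{0}}(c,[y_{0},x_{0}])\leq \pi/2$ for every geodesic $c$ issuing from $y_{0}$; equivalently $L(y_{0})\subseteq \overline{B}_{\pi/2}(p)$ in the link metric, where $p$ is the initial direction of $[y_{0},x_{0}]$.

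I would then exclude every possible position of $y_{0}$. Interior to a $2$-cell: $L(y_{0})$ is a full circle of length $2\pi$ and cannot fit inside any $\pi/2$-ball. Interior to an edge $e$ shared by $k_{e}\geq 2$ squares: $L(y_{0})$ is a graph of two poles joined by $k_{e}$ arcs of length $\pi$, and a short computation shows that no point $p$ realises the containment once $k_{e}\geq 2$. Interior to an edge with $k_{e}=1$: $e$ is already a free face, contradicting the assumption. At a vertex $v_{0}$: the containment forces $L(v_{0})$ to be either a star centred at a link-vertex $e_{0}$ (otherwise some link-edge between two non-$e_{0}$ vertices has interior points at distance $>\pi/2$ from $e_{0}$) or the singleton $\{e_{0}\}$. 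The leaves of such a star correspond to edges of $K$ at $v_{0}$ lying in exactly one $2$-cell, hence are free faces (contradiction), while the singleton case makes $v_{0}$ a pendant, contradicting the pruning. Every possibility is excluded, so $f$ has no maximum on the compact $K'$, which is absurd. The step I expect to be the main obstacle is organising this vertex subcase: the link of $v_{0}$ can a priori be any finite graph, and one must use the cubical axiom that two distinct $2$-cells intersect in at most one face (ruling out multi-edges in links and guaranteeing $e_{3}\neq e_{2}$ in the interior-of-link-edge analysis) to force the restrictive star picture, which the no-free-face and no-pendant hypotheses then preclude.
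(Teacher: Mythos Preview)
Your proposal is correct and takes a genuinely different route from the paper in both parts, most strikingly in Part~(2).

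For Part~(1), the paper does not perform your three-way case split. It works only at a point $g$ lying on an edge shared by at least two $2$-cells and uses comparison triangles together with Alexandrov's Lemma to show directly that the two sector angles at $g$ each equal $\pi$, hence $\theta(g)\geq 2\pi$; the argument is purely metric (no link/girth criterion invoked) and does not explicitly rely on the piecewise-Euclidean structure. Your treatment is more systematic in that it also addresses points in open $2$-cells and interior vertices, but it leans on the piecewise-Euclidean metric (Euclidean disks inside cells, link edges of length $\pi/2$). Note also that the girth statement you cite from the introduction is formulated for \emph{simplicial} complexes (girth $\geq 6$); for square complexes the relevant threshold is combinatorial girth $\geq 4$, which you should state explicitly.

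For Part~(2) the approaches diverge completely. The paper first uses the CAT(0) inequality and Alexandrov's Lemma to show that every $2$-cell has zero curvature (is flat), and then derives a contradiction from finiteness and contractibility of $K$ under the hypothesis that every edge lies in at least two $2$-cells; this final implication is stated quite tersely. Your farthest-point/first-variation argument bypasses the flatness computation entirely and produces an explicit contradiction via the link of the maximizer. Your route is the standard mechanism behind free-face existence for CAT(0) polyhedral complexes and generalizes more readily to higher dimensions, whereas the paper's approach stays within the comparison-triangle toolkit that is reused throughout the subsequent collapsing lemmas.
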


\begin{proof}

\begin{enumerate}

\item

Let $\tau = [c,h]$ be a $1$-cell of $K$ that is the face of at least two
$2$-cells in $K$, $\sigma_{1}$ and $\sigma_{2}$. Let $a,b,c,h$ be the vertices of the $2$-cell $\sigma_{1}$. Let $c,h,e,f$
be the vertices of the $2$-cell $\sigma_{2}$. Let $g \in \tau$.

\begin{figure}[ht]
   \begin{center}
     \includegraphics[height=2.5cm]{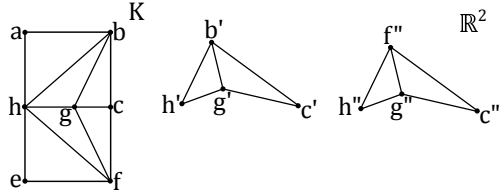}
        \caption{Comparison triangles}
 \end{center}
\end{figure}

Let $\triangle (b',h',g')$ be a comparison triangle for the geodesic triangle \\
$\triangle (b,h,g)$. Let $\triangle
(b',c',g')$ be a comparison triangle for the geodesic triangle
$\triangle (b,c,g)$. We place the comparison triangles $\triangle
(b',h',g')$ and $\triangle (b',c',g')$ in different half-planes with
respect to the line $b'g'$ in $\mathds{R}^{2}$. Let $\triangle
(h'',f'',g'')$ be a comparison triangle for the geodesic triangle
$\triangle (h,f,g)$. Let $\triangle (c'',f'',g'')$ be a
comparison triangle for the geodesic triangle $\triangle (c,f,g)$. We
place the comparison triangles $\triangle (h'',f'',g'')$ and
$\triangle (c'',f'',g'')$ in different half-planes with respect to
the line $f''g''$.

Because $d_{\mathds{R}^{2}}(b',g') = d(b,g)$ and
$d_{\mathds{R}^{2}}(f'',g'') = d(f,g)$, Alexandrov's Lemma implies
that $\angle_{g'} (h',b') +$ $\angle_{g'} (b',c')$ $= \pi$ and
$\angle_{g''} (h'',f'') + \angle_{g''} (f'',c'') = \pi$. So
$\angle_{g} (h,b) + \angle_{g} (b,c) = \pi$ and $\angle_{g} (h,f) +
\angle_{g} (f,c) = \pi$. Hence, because the $1$-cell $\tau$ is
contained in at least two $2$-cells of $K$, the full angle
around the point $g$ in $|K|$ equals at least $2 \pi$.

We denote by $\theta$
the full angle around the point $g$ in $|K|$. Because $|K|$ has a
convex metric, the curvature at the interior point $g$ of $|K|$ is equal to $\omega (g) = 2 \pi
- \theta \leq 0$. $|K|$ has therefore
curvature $\leq 0$ at any of its interior points.

\item Let $\tau = [c,h]$ be a $1$-cell of $K$ such that there exist at least two
$2$-cells $\sigma_{1}$ and $\sigma_{2}$ in $K$ with $\tau <
\sigma_{1}$ and $\tau < \sigma_{2}$. Let $a,b,c,h$ be the vertices of the
$2$-cell $\sigma_{1}$, and let $c,h,e,f$ be the vertices of the
$2$-cell $\sigma_{2}$. Let $g \in \tau$.

\begin{figure}[ht]
   \begin{center}
     \includegraphics[height=2.5cm]{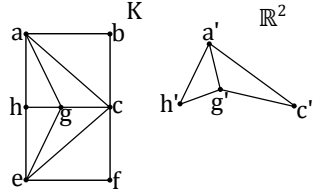}
        \caption{Comparison triangles}
 \end{center}
\end{figure}

Let $\triangle (a',h',g')$ be a comparison triangle for the geodesic triangle $\triangle (a,h,g)$ in $|K|$. Let $\triangle
(a',c',g')$ be a comparison triangle for the geodesic triangle
$\triangle (a,c,g)$ in $|K|$. We place the comparison triangles $\triangle
(a',h',g')$ and $\triangle (a',c',g')$ in different half-planes with
respect to the line $a'g'$.

Because any geodesic triangles in $|K|$ satisfies the CAT(0)
inequality and $g \in [h,c]$, $\pi = \angle_{g} (h,c) \leq
\angle_{g} (h,a) + \angle_{g} (a,c) \leq \angle_{g'} (h',a') +
\angle_{g'} (a',c')$. So $\angle_{g'} (h',a') + \angle_{g'} (a',c')
\geq \pi$. According to Alexandrov's Lemma, we have
$d_{\mathds{R}^{2}}(a',g') \leq d(a,g)$.
But $\triangle (a',h',g')$ is a comparison triangle for the geodesic triangle $\triangle
(a,h,g)$ and hence $d_{\mathds{R}^{2}}(a',g') = d(a,g)$. Because one
equality in Alexandrov's Lemma implies the others, the following
equalities hold $\angle_{g'} (h',a') + \angle_{g'} (a',c') = \pi$,
$\angle_{h} (a,g) = \angle_{h'} (a',g')$, $\angle_{c} (a,g) =
\angle_{c'} (a',g')$ and $\angle_{a} (h,g) + \angle_{a} (g,c) =
\angle_{a'} (h',c')$. Note that $\angle_{a'}(h',c') + \angle_{h'}(a',g') + \angle_{c'}(a',g') = \pi$. So the sum of the angles between the sides of
the geodesic triangle $\triangle(a,h,c)$ equals $\pi$. Therefore, since $|K|$ has a convex
metric, the curvature of the geodesic triangle $\triangle(a,h,c)$ equals
$\omega(\triangle(a,h,c)) = \pi - \pi = 0$.

 It similarly follows that the geodesic triangles $\triangle(a,b,c)$, $\triangle(b,h,c)$,
 and $\triangle(a,b,h)$ in $|K|$ have curvature zero.
So these triangles are isometric to their comparison triangles and are therefore flat.
Note that the geodesic triangles $\triangle(a,h,c)$, $\triangle(b,h,c)$,
$\triangle(a,b,c)$ and $\triangle(a,b,h)$ overlap and cover the $2$-cell $\sigma_{1}$.
So the curvature of $\sigma_{1}$ is equal to $\omega(\sigma_{1}) = [\angle_{a}(h,c) + \angle_{h}(a,c) + \angle_{c}(a,h)] +
[\angle_{a}(b,c) + \angle_{b}(a,c) + \angle_{c}(a,b)] - 2 \pi = \pi + \pi - 2\pi = 0$.
Thus $\sigma_{1}$ is flat.

Assume that $K$ has no
$2$-cell with a free $1$-dimensional face.
Hence each $1$-cell of $K$ is a face of at least two $2$-cells whose $1$-cells are further faces of at least two
$2$-cells and so on. $K$ is contractible because it is a CAT(0) space. Therefore, since $K$ is finite and the $2$-cells of $K$ are flat,
this implies a contradiction.
So $K$ has a $2$-cell with a free $1$-dimensional face.

\end{enumerate}

\end{proof}

Related to the result above we note the following.

\begin{remark} There are contractible finite $2$-complexes which are not collapsible. The dunce hat space, for instance, is a contractible surface without boundary (so no triangulation of it can have any free faces) (see \cite{zeeman_1964}). But the dunce hat is the flag triangulation of
a contractible $2$-complex which does not admit a CAT($0$) metric (see \cite{crisp_2002}).

\end{remark}

We show further that the subcomplex $K'$ obtained by performing an
elementary collapse on a finite, CAT(0) square $2$-complex $K$, remains
non-positively curved. We treat only the general case when $K'$ is
obtained by pushing in an entire $2$-cell with a free
$1$-dimensional face, by starting at its free face. We emphasize
that the same result holds for any deformation retract of a finite,
CAT(0) square $2$-complex $K$ obtained by pushing in any geodesic triangle
$\delta$ in $|K|$ that belongs to the $2$-cell of $K$ which has a free $1$-dimensional face. Namely, one side of $\delta$ is included in the free
$1$-dimensional face of this $2$-cell.

For the remainder of the paper we fix the following notaion.
Let $K$ be a finite, CAT(0) square $2$-complex such that it has a $2$-cell $\sigma$ which has a free $1$-dimensional
face $e$. Let $d$ be the CAT(0) metric $|K|$ is endowed with. We show that the subcomplex $K' = K \setminus \{e, \sigma \} $ is
non-positively curved.

Let $a,b,c,h$ be the vertices of the $2$-cell $\sigma$. Let $e =
[b,c]$ be its free $1$-dimensional face. We denote by $r := \max
\{d(a,b), $ $d(a,c), d(a,h) \}$. We consider in $|K|$ a neighborhood of $a$
homeomorphic to a closed ball of radius $r$, $U = \{x \in |K| \mid
d(a,x) \leq r\}$. $U$ endowed with the induced metric, is a CAT(0)
space. Because $U$ is complete and it has a strongly convex metric,
any two points in $U$ are joined by a unique geodesic segment which
is contained in $U$. So any geodesic triangle with vertices at any
three points in $U$, belongs to $U$ and it satisfies the CAT(0)
inequality.

We consider in $|K'|$ a neighborhood of $a$ homeomorphic to a closed
ball of radius $r$, $U' = \{x \in |K'| \mid d'(a,x) \leq r\}$,
endowed with the induced metric $d'$. We notice that $U' = U
\setminus \{e, \sigma\}$. We find next the geodesic segments in $U'$
with respect to $d'$.
Let $p$, $q$ be two distinct points in $U$ that do not belong to $\sigma$ such that the geodesic segment $[p,q]$
intersects the interior of $\sigma$. One of the points $p$ and $q$ may lie on one of the edges of $\sigma$ but not both. There are two cases to consider which are given below.

Case A: The segment $[p,q]$ intersects $[a,b]$ in $m$, and $[a,h]$ in $n$.
The segment $[p,q]$ does not intersect $[c,h]$ and $[b,c]$. The point $p$ may belong to $[a,b]$ and the point $q$ may belong to $[a,h]$ but not simultaneously.

\begin{figure}[ht]
   \begin{center}
     \includegraphics[height=2.8cm]{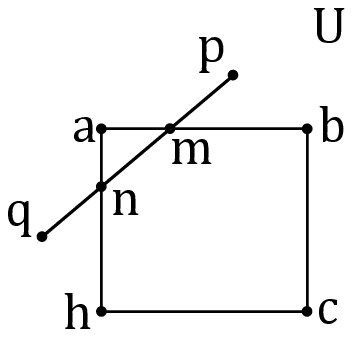}
        \caption{Case A}
 \end{center}
\end{figure}

Case B: The segment $[p,q]$ intersects $[a,b]$ in $m$, and $[c,h]$ in $n$.
The segment $[p,q]$ does not intersect $[a,h]$ and $[b,c]$. The point $p$ may belong to $[a,b]$ and the point $q$ may belong to $[c,h]$ but not simultaneously.

\begin{figure}[ht]
   \begin{center}
     \includegraphics[height=3.5cm]{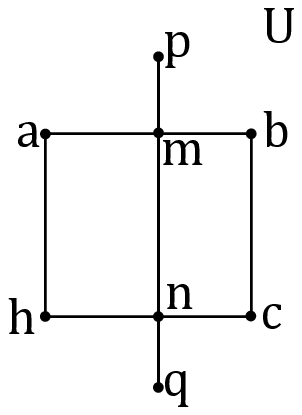}
        \caption{Case B}
 \end{center}
\end{figure}

The aim of the Lemmas \ref{3.7} - \ref{3.11} below is to find some geodesic segments in the subcomplex obtained by performing an elementary collapse on a finite, CAT(0) square $2$-complex.

\begin{lemma}\label{3.7}
Let $c : [0,1] \rightarrow U$ be a path in $U$ joining $p$ to $q$
that does not intersect $\sigma$. The points $p$ and $q$ are chosen as in Case A. Then there exists a point $s_{0}$
on $c$ such that the geodesic segments $[p,s_{0}]$ and $[q,s_{0}]$
do not intersect $\sigma$ and such that the following inequality
holds: $d'(p,a) + d'(a,q) < d'(p,s_{0}) + d'(s_{0},q)$.
\end{lemma}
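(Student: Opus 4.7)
The plan has two parts: first construct $s_0 \in c$ so that the geodesic segments $[p, s_0]$ and $[q, s_0]$ in $U$ avoid $\sigma$, then verify the length inequality. Since $(U, d)$ is CAT(0) and complete, geodesics in $U$ are unique and depend continuously on their endpoints. I introduce
\[
T_1 = \{t \in [0, 1] : [p, c(t)] \cap \sigma = \emptyset\}, \quad T_2 = \{t \in [0, 1] : [c(t), q] \cap \sigma = \emptyset\},
\]
which are open subsets of $[0, 1]$ by continuity of geodesics and compactness of $\sigma$. Clearly $0 \in T_1$ and $1 \in T_2$; moreover $1 \notin T_1$ and $0 \notin T_2$ because $[p, q]$ passes through $\mathrm{int}(\sigma)$ by the Case~A hypothesis. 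I would argue geometrically that $T_1 \cup T_2 = [0, 1]$: at any $t$ for which $[p, c(t)]$ crosses $\mathrm{int}(\sigma)$, the point $c(t)$ sits in the component of $U \setminus \sigma$ from which $q$ is reachable by a geodesic avoiding $\sigma$. Connectedness of $[0, 1]$ then forces $T_1 \cap T_2 \ne \emptyset$, and I pick $s_0 = c(t_0)$ for some $t_0 \in T_1 \cap T_2$ with $c(t_0) \ne a$ (which is possible because $c$ carries uncountably many points while $a$ is a single vertex).

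For the inequality, observe first that $[p, s_0]$ and $[q, s_0]$ lie in $U'$, so $d'(p, s_0) = d(p, s_0)$ and $d'(s_0, q) = d(s_0, q)$. The CAT(0) geodesics $[p, a]$ and $[a, q]$ in $U$ likewise avoid $\mathrm{int}(\sigma)$: by the flatness of $\sigma$ established in the preceding Proposition combined with uniqueness of CAT(0) geodesics, any interior crossing could be rerouted along $\partial \sigma$ without change in length, contradicting uniqueness. Hence $d'(p, a) = d(p, a)$ and $d'(a, q) = d(a, q)$. To compare the two sums I apply Alexandrov's Lemma (Lemma~\ref{2.47}) to the geodesic triangles $\triangle(s_0, p, a)$ and $\triangle(s_0, a, q)$ in $U$, which share the side $[s_0, a]$. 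In Case~A the Alexandrov angles at $a$ satisfy $\angle_a(s_0, p) + \angle_a(s_0, q) > \pi$, since $s_0$ lies in the angular sector at $a$ opposite to the $\sigma$-side sector bounded by $[a, p]$ and $[a, q]$. Passing to the Euclidean comparison triangles, whose angles at $\overline{a}$ are at least as large by the CAT(0) condition, Alexandrov's Lemma delivers $d(p, a) + d(a, q) < d(p, s_0) + d(s_0, q)$, which is the required inequality.

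The main obstacle is the intrinsic verification of the strict angle inequality $\angle_a(s_0, p) + \angle_a(s_0, q) > \pi$ in $U$, which requires a careful analysis of the angular structure at the corner $a$ of the flat cell $\sigma$ together with the detour geometry of $c$. Smaller technical points, such as the possibility that $c$ passes through $a$ itself and the rigorous justification that $T_1 \cup T_2 = [0, 1]$, are handled by elementary perturbation arguments and by using the continuity of the geodesics in the CAT(0) ball $U$.
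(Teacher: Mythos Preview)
Your approach is quite different from the paper's. The paper does not argue directly in the square complex at all: it subdivides each square along a diagonal, observes that the resulting simplicial $2$-complex $L$ carries the same CAT(0) metric, and then simply invokes the simplicial version of this lemma already proved in the author's thesis (\cite{lazar_2010_8}, \S3.1.4). In other words, the paper's ``proof'' is a one-line reduction. What this buys is that all the delicate angular and separation arguments you are attempting have already been carried out once in the simplicial setting and need not be repeated.

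Your direct argument has a genuine gap at the covering step $T_1\cup T_2=[0,1]$. This claim can fail in a branching CAT(0) square complex. Suppose a second square $\sigma'$ is glued along $[c,h]$, and further cells allow the path $c$ to travel from $p$ (near $[a,b]$) around into $\sigma'$ and eventually to $q$ (near $[a,h]$) while avoiding $\sigma$. For a parameter value $t$ with $c(t)$ deep in $\sigma'$, the unique geodesic $[p,c(t)]$ enters $\sigma$ through $[a,b]$ and leaves through $[c,h]$, while $[q,c(t)]$ enters through $[a,h]$ and leaves through $[c,h]$; hence $t\notin T_1\cup T_2$. Since your deduction of $T_1\cap T_2\neq\emptyset$ relies on connectedness together with $T_1\cup T_2=[0,1]$, this is not a minor technicality but the heart of the existence argument for $s_0$. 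The second acknowledged obstacle---the strict inequality $\angle_a(s_0,p)+\angle_a(s_0,q)>\pi$---is also not immediate: the directions of $[a,p]$ and $[a,q]$ at $a$ need not coincide with the edges $[a,b]$ and $[a,h]$, so the ``opposite sector'' picture requires justification. Both issues are exactly what the paper sidesteps by reducing to the simplicial case.
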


\begin{proof}

We turn the square complex $K$ into a simplicial complex $L$. So $|K| = |L|$. Let $L$ be a finite simplicial $2$-complex such that all $i$-simplices of $K$ are also $i$-simplices of $L, 0 \leq i \leq 1$. Besides $L$ contains, for each $2$-cell $\sigma$ of $K$, the $1$-simplex $[a,h]$ and the $2$-simplices $\triangle(a,b,h)$, $\triangle(a,c,h)$. Note that $L$ is a CAT(0) space. The lemma now follows due to \cite{lazar_2010_8}, chapter $3.1.4$, page $39$.

\begin{figure}[ht]
   \begin{center}
     \includegraphics[height=2.5cm]{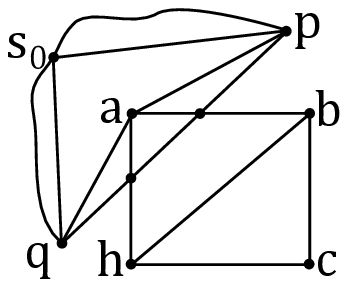}
        \caption{}
 \end{center}
\end{figure}

\end{proof}

\begin{lemma}\label{3.9}
Let the segment $[p,q]$ be as in case A.
Then the geodesic segment $[p,q]$ in $U'$ with respect to $d'$, is the
union of the geodesic segments $[p,a]$ and $[a,q]$.
\end{lemma}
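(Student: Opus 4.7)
The plan is to show that the concatenated path $[p,a]\cup[a,q]$ realizes the $d'$-distance from $p$ to $q$ in $U'$, from which the lemma follows since a path whose length equals the distance between its endpoints is a geodesic segment (by the theorem on segments quoted early in Section~2).

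First I would verify that the CAT(0) geodesics $[p,a]$ and $[a,q]$ in the CAT(0) ball $U$ remain in $U'$, i.e.\ do not enter $\mathrm{int}\,\sigma$ nor cross the free edge $e=[b,c]$. This uses the Case~A configuration: $p$ lies on the side of $[a,b]$ opposite $\sigma$ and $q$ on the side of $[a,h]$ opposite $\sigma$, so any CAT(0) minimizer in $U$ from $p$ to the vertex $a$ (respectively from $a$ to $q$) stays outside $\mathrm{int}\,\sigma$, since any excursion through $\mathrm{int}\,\sigma$ could be shortened by projecting onto the edge $[a,b]$ (respectively $[a,h]$) using convexity of the distance function in the CAT(0) space $U$. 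Consequently $d'(p,a)=d(p,a)$, $d'(a,q)=d(a,q)$, and the concatenation is a rectifiable path in $U'$ of length $d'(p,a)+d'(a,q)$, giving
\[
d'(p,q)\ \le\ d'(p,a)+d'(a,q).
\]

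Next I would obtain the reverse inequality via Lemma~\ref{3.7}. Let $c\colon [0,1]\to U'$ be any rectifiable path from $p$ to $q$. Since $U'=U\setminus\{e,\sigma\}$, the path $c$ is a path in $U$ that does not meet $\sigma$, and so Lemma~\ref{3.7} produces a point $s_0$ on $c$ such that the CAT(0) geodesics $[p,s_0]$ and $[s_0,q]$ in $U$ avoid $\sigma$ and
\[
d'(p,a)+d'(a,q)\ <\ d'(p,s_0)+d'(s_0,q).
\]
Because these two CAT(0) geodesics are themselves paths in $U'$, they realize $d'(p,s_0)=d(p,s_0)$ and $d'(s_0,q)=d(s_0,q)$. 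Splitting $c$ at $s_0$ and using that the $d$-length of each subpath bounds the corresponding $d$-distance from below,
\[
l(c)\ \ge\ d(p,s_0)+d(s_0,q)\ =\ d'(p,s_0)+d'(s_0,q)\ \ge\ d'(p,a)+d'(a,q).
\]
Taking the infimum over all admissible paths $c$ yields $d'(p,q)\ge d'(p,a)+d'(a,q)$.

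Combining the two inequalities, $d'(p,q)=d'(p,a)+d'(a,q)$, so $a$ lies between $p$ and $q$ in $(U',d')$. The concatenation $[p,a]\cup[a,q]$ is therefore a path in $U'$ whose length equals $d'(p,q)$, and hence is a geodesic segment joining $p$ to $q$ in $U'$. The step I expect to be the main obstacle is the first one, namely rigorously verifying that the CAT(0) geodesics $[p,a]$ and $[a,q]$ in $U$ do not wander into the open $2$-cell $\sigma$; the argument above is morally a projection/convexity argument and may need to be spelled out using Alexandrov's Lemma together with the uniqueness of CAT(0) geodesics in $U$.
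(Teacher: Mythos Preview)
Your approach is the paper's: exhibit $[p,a]\cup[a,q]$ as a path in $U'$, then use Lemma~\ref{3.7} to rule out shorter competitors. The paper takes the first step for granted; you are right to flag it, and your convexity/projection sketch is the natural way to justify it.

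There is, however, a slip in your second step. You apply Lemma~\ref{3.7} to \emph{every} rectifiable path $c$ in $U'$, arguing that such a $c$ ``does not meet $\sigma$'' because $U'=U\setminus\{e,\sigma\}$. But the collapse removes only the open $2$-cell and the open free edge; $U'$ still contains the edges $[a,b],[a,h],[c,h]$ and in particular the vertex $a$. The concatenation $[p,a]\cup[a,q]$ is itself a path in $U'$, and if Lemma~\ref{3.7} applied to it the \emph{strict} inequality there would yield $d'(p,a)+d'(a,q)<d'(p,a)+d'(a,q)$. So Lemma~\ref{3.7} cannot be invoked for paths through $a$, and your blanket claim that every $c$ in $U'$ satisfies its hypothesis is false. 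The paper avoids this by splitting: among paths through $a$ the concatenation is shortest (triangle inequality along each leg), and Lemma~\ref{3.7} is used only in a contradiction argument against a hypothetical competitor $c_0$ \emph{that does not pass through $a$}. Insert the same case split and your argument becomes the paper's.
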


\begin{proof}

We denote by $c : [0,1] \rightarrow U'$ the path obtained by
concatenating the segments $[p,a]$ and $[a,q]$. Among all paths
joining $p$ to $q$ in $U'$ that pass through $a$, the path $c$ has
the shortest length.

\begin{figure}[ht]
   \begin{center}
     \includegraphics[height=2.5cm]{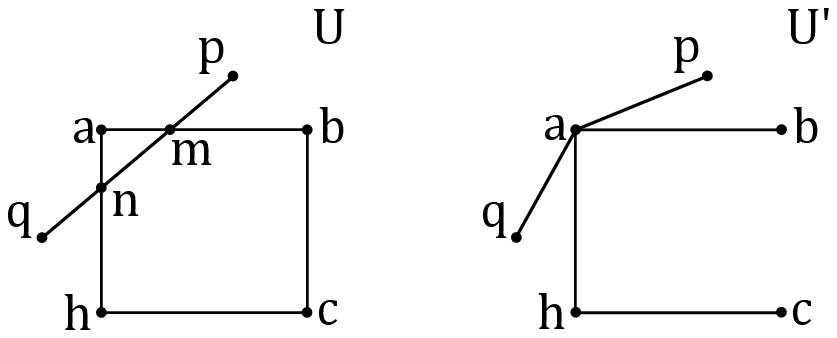}
        \caption{Case A}
 \end{center}
\end{figure}

Suppose that there exists a path $c_{0} : [0,1] \rightarrow U'$
connecting $p$ to $q$ in $U'$ that does not pass through $a$ and
whose length is less or equal to the length of the path $c$. Because
the path $c_{0}$ does not intersect $\sigma$, there exists,
according to Lemma \ref{3.7}, a point $s_{0}$ on $c_{0}$  such that
the geodesic segments $[p,s_{0}]$ and $[s_{0},q]$  in $U$ do not intersect
$\sigma$. The geodesic segments  $[p,s_{0}]$ and $[s_{0},q]$ in $U$
belong therefore to $U'$. So

$$d'(p,s_{0}) + d'(s_{0},q) \leq l(c_{0}) \leq l(c) = d'(p,a) + d'(a,q)$$

which implies, by Lemma \ref{3.7}, a contradiction. Any path in $U'$
joining $p$ to $q$ and that does not pass through $a$, is therefore longer
than $c$.

Altogether, it follows that the geodesic segment joining $p$ to $q$
in $U'$ with respect to $d'$ is the union of the geodesic segments
$[p,a]$ and $[a,q]$.

\end{proof}

\begin{lemma}\label{3.11}
Let the segment $[p,q]$ be as in case B.
Then the geodesic segment $[p,q]$ in $U'$ with respect to $d'$, is the
union of the geodesic segments $[p,a]$, $[a,h]$ and $[h,q]$.
\end{lemma}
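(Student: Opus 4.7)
My plan is to mirror the proof of Lemma~\ref{3.9}, with the candidate geodesic now being the three-piece concatenation $c := [p,a] \cup [a,h] \cup [h,q]$. First I would verify that $c$ is a valid path in $U'$: the edge $[a,h]$ is a $1$-face of $\sigma$ that is not removed by the elementary collapse (only $\sigma$ and the free face $e = [b,c]$ are), so $[a,h] \subset |K'|$. Moreover, in Case~B the geodesic $[p,q]$ in $U$ enters $\sigma$ through $[a,b]$ at $m$ and exits through $[c,h]$ at $n$, so by the strong convexity of $U$ and the convexity of $\sigma$ the geodesic segments $[p,a]$ and $[h,q]$ in $U$ touch $\sigma$ only at the endpoints $a$ and $h$ respectively and therefore lie in $U'$. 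Consequently $c$ lies in $U'$, and among all paths in $U'$ from $p$ to $q$ passing through both $a$ and $h$ it has the shortest length by the triangle inequality.

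Next I would establish a Case~B analog of Lemma~\ref{3.7}: for every path $c_{0} : [0,1] \to U$ joining $p$ to $q$ that does not intersect $\sigma$, there exists a point $s_{0}$ on $c_{0}$ such that the geodesic segments $[p,s_{0}]$ and $[s_{0},q]$ in $U$ do not intersect $\sigma$ and
\begin{equation*}
d'(p,a) + d'(a,h) + d'(h,q) < d'(p,s_{0}) + d'(s_{0},q)
\end{equation*}
holds. The proof would proceed exactly as in Lemma~\ref{3.7}: turn the square $2$-complex $K$ into a CAT(0) simplicial $2$-complex $L$ with $|L|=|K|$ by subdividing each square with a diagonal, and then invoke the corresponding detour inequality for CAT(0) simplicial $2$-complexes established in \cite{lazar_2010_8}.

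Granting this analog, the lemma follows by contradiction as in Lemma~\ref{3.9}. Suppose some path $c_{0} : [0,1] \to U'$ joining $p$ to $q$ satisfies $l(c_{0}) \leq l(c)$. Regarded as a path in $U$ avoiding $\sigma$, it produces a point $s_{0}$ with $[p,s_{0}], [s_{0},q] \subset U'$ and
\begin{equation*}
d'(p,s_{0}) + d'(s_{0},q) \leq l(c_{0}) \leq l(c) = d'(p,a) + d'(a,h) + d'(h,q),
\end{equation*}
contradicting the strict inequality above. The main obstacle is the Case~B analog of Lemma~\ref{3.7}: unlike Case~A, where the detour goes around a single vertex $a$ and is handled by one triangle of the subdivision, in Case~B the segment $[p,q]$ crosses both triangles of $\sigma$ as well as the added diagonal, so the simplicial detour result from \cite{lazar_2010_8} must be composed to yield a single inequality featuring the two-vertex detour through $a$ and $h$.
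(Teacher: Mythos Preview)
Your strategy is plausible but it diverges from the paper's, and the divergence lies exactly at the point you flag as ``the main obstacle.'' You propose to prove a Case~B analogue of Lemma~\ref{3.7} (a detour inequality featuring the two-vertex path through $a$ and $h$) by composing the simplicial detour results from \cite{lazar_2010_8}, and then rerun the contradiction argument of Lemma~\ref{3.9}. That composition is not carried out, and it is genuinely the heart of the matter: the simplicial lemma handles a detour around a single vertex, so you would need to argue carefully that two such inequalities chain into one featuring $d'(p,a)+d'(a,h)+d'(h,q)$, controlling the intermediate points.

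The paper avoids this extra lemma entirely by a midpoint trick: pick $f$, the midpoint of $[a,h]$, and observe that in $U$ the geodesic $[p,f]$ crosses $[a,b]$ and $[a,h]$ (a Case~A configuration with corner $a$), while $[q,f]$ crosses $[c,h]$ and $[a,h]$ (a Case~A configuration with corner $h$). Lemma~\ref{3.9} then gives directly that the $d'$-geodesic from $p$ to $f$ is $[p,a]\cup[a,f]$ and the $d'$-geodesic from $f$ to $q$ is $[f,h]\cup[h,q]$; concatenating yields $[p,a]\cup[a,h]\cup[h,q]$. So the paper reduces Case~B to two instances of the already-proved Case~A rather than establishing a new detour inequality. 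Your route could be made to work, but the paper's reduction is shorter and needs no new ingredients beyond Lemma~\ref{3.9}.
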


\begin{proof}

Let $f$ be the midpoint of the geodesic segment $[a,h]$. Because $U$ is a CAT(0) space, the midpoint $f$ exists and it is unique.
Let $[p,f] \cap [a,b] = \{ m'\}$ and let $[q,f] \cap [c,d] = \{ n'\}$.
The previous lemma implies that the geodesic segment $[p,f]$ in $U'$ with respect to $d'$, is the
union of the geodesic segments $[p,a]$ and $[a,f]$.
It also implies that the geodesic segment $[q,f]$ in $U'$ with respect to $d'$, is the
union of the geodesic segments $[q,h]$ and $[h,f]$.
The geodesic segment $[p,q]$ in $U'$ with respect to $d'$, is therefore the
union of the geodesic segments $[p,a]$, $[a,f]$, $[f,h]$ and $[h,q]$. So the geodesic segment $[p,q]$ in $U'$ with respect to $d'$, is the
union of the geodesic segments $[p,a]$, $[a,h]$ and $[h,q]$.

\begin{figure}[ht]
   \begin{center}
     \includegraphics[height=3.5cm]{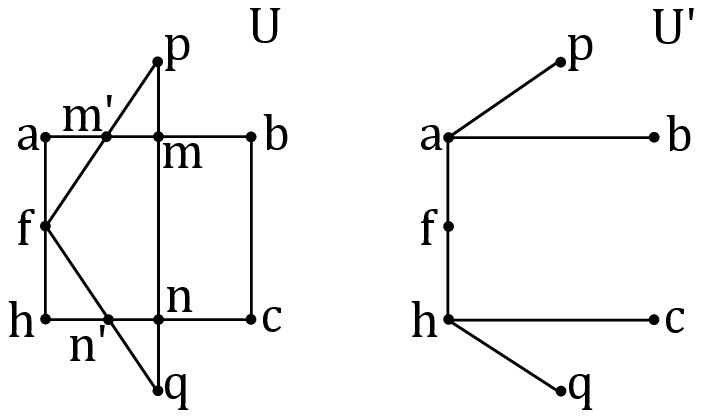}
        \caption{Case B}
 \end{center}
\end{figure}

\end{proof}

In the Lemmas \ref{3.13} - \ref{3.23} below we study whether certain geodesic triangles in $U'$
fulfill the CAT(0) inequality. This will be useful when showing that $U'$ is non-positively curved.

\begin{lemma}\label{3.13}
Let the segment $[p,q]$ be as in Case A.
Let $r$ be a point in $U$ such that the geodesic segments $[r,p]$
and $[r,q]$ do not intersect $\sigma$. Also the quadrilaterals $\rm{ramp}$ and $\rm{ranq}$ are convex. Then, the geodesic
triangle $\triangle (p,r,q)$ in $U'$ satisfies the CAT(0)
inequality.
\end{lemma}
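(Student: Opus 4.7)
The plan is to split the $U'$-triangle $\triangle(p,r,q)$ along the segment $[r,a]$ into the two sub-triangles $\triangle(p,r,a)$ and $\triangle(q,r,a)$, each of which sits inside a convex quadrilateral and hence inherits the CAT(0) inequality from $U$, and then reassemble them in the Euclidean plane via Alexandrov's Lemma (Lemma \ref{2.47}). First I would record the structural facts: by Lemma \ref{3.9}, the side $[p,q]$ in $U'$ is $[p,a]\cup[a,q]$; the segments $[r,p]$ and $[r,q]$ coincide with the corresponding $U$-geodesics by hypothesis; and since $[r,a]$ is a diagonal of the convex quadrilateral $\mathrm{ramp}$ (and of $\mathrm{ranq}$), it lies inside these quadrilaterals, avoids the interior of $\sigma$, and satisfies $d'(r,a)=d(r,a)$. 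Consequently $\triangle(p,r,a)\subset\mathrm{ramp}$ and $\triangle(q,r,a)\subset\mathrm{ranq}$ both lie in $U'$ with $d'=d$, and each satisfies the CAT(0) inequality because $U$ does.

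The crux is to verify that the Euclidean comparison angles at $a$ in a comparison triangle $\overline{\triangle}_1$ for $\triangle(p,r,a)$ and a comparison triangle $\overline{\triangle}_2$ for $\triangle(q,r,a)$ satisfy $\gamma+\gamma'\geq\pi$. Since $d'(p,q)=d'(p,a)+d'(a,q)$, the point $a$ lies on the $U'$-geodesic from $p$ to $q$; the comparison triangle for every sufficiently small sub-segment meeting at $a$ is degenerate, forcing the Alexandrov angle $\angle_a^{U'}(p,q)=\pi$. The triangle inequality for Alexandrov angles, applied to the germs $[a,p]$, $[a,r]$, $[a,q]$, then yields
\[
\angle_a^{U'}(r,p)+\angle_a^{U'}(r,q)\ \geq\ \angle_a^{U'}(p,q)\ =\ \pi.
\]
None of these germs meets $\sigma$, so the $U'$-angles equal the corresponding $U$-angles, and CAT(0) of $U$ upgrades each to the comparison angle $\gamma$ or $\gamma'$; hence $\gamma+\gamma'\geq\pi$. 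Alexandrov's Lemma now applies with $A=r$, $B=p$, $B'=q$, $C=a$: the resulting Euclidean triangle $\overline{\triangle}$ has $d(\overline{r},\overline{p})=d(r,p)$, $d(\overline{r},\overline{q})=d(r,q)$ and $d(\overline{p},\overline{q})=d(p,a)+d(a,q)=d'(p,q)$ --- precisely the comparison triangle for $\triangle(p,r,q)$ in $U'$ --- and delivers $d(\overline{r},\overline{a})\geq d(r,a)=d'(r,a)$ together with the angle dominations $\overline{\beta}\geq\beta$ and $\overline{\beta}'\geq\beta'$ at $\overline{p}$ and $\overline{q}$.

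To upgrade the inequality at $a$ to the full CAT(0) inequality, I would use the equivalent vertex-to-opposite-side formulation: for arbitrary $x\in[p,a]\subset[p,q]_{U'}$, convexity of $\mathrm{ramp}$ keeps $[r,x]$ inside the quadrilateral, so $d'(r,x)=d(r,x)$; CAT(0) of $U$ applied to $\triangle(p,r,a)$ then gives $d(r,x)\leq d(\overline{r}_1,\overline{x}_1)$ in $\overline{\triangle}_1$, and the Euclidean law of cosines --- since $\overline{\beta}\geq\beta$ and the two sides $d(\overline{r},\overline{p})$, $d(\overline{p},\overline{x})$ agree with their counterparts in $\overline{\triangle}_1$ --- yields $d(\overline{r}_1,\overline{x}_1)\leq d(\overline{r},\overline{x})$. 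The case $x\in[a,q]$ is symmetric via $\mathrm{ranq}$, and the bounds from the vertices $p$ and $q$ to their opposite sides are handled analogously. The main obstacle I anticipate is the angle inequality $\gamma+\gamma'\geq\pi$: it rests on the delicate passage from a distance identity in $U'$ (whose CAT(0) status at $a$ is precisely what we are trying to establish) back to comparison angles in the Euclidean plane, and it is the observation that every relevant geodesic germ at $a$ avoids $\sigma$ that justifies reducing the argument to CAT(0) of $U$.
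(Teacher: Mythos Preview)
Your proposal is correct and follows the same strategy as the paper: split $\triangle(p,r,q)$ along the segment $[r,a]$ into the two sub-triangles $\triangle(p,r,a)$ and $\triangle(q,r,a)$, each of which inherits the CAT(0) inequality from $U$, and then glue the Euclidean comparison triangles via Alexandrov's Lemma~\ref{2.47}. Your verification of the crucial hypothesis $\gamma+\gamma'\geq\pi$ --- via the triangle inequality for Alexandrov angles at $a$, using that $a$ lies on the $U'$-geodesic $[p,q]$ so that $\angle_a(p,q)=\pi$ --- is in fact more explicit than the paper's, which simply asserts that one of the glued comparison angles at $a'$ or at $r'$ is at least $\pi$ and proceeds ``without loss of generality'' from the case at $a'$; note also that in the quadrilateral $\mathrm{ramp}$ the segment $[r,a]$ is a \emph{side} rather than a diagonal (it is $[p,a]$ that is the diagonal you later use), but this does not affect your argument.
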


\begin{proof}

By Lemma \ref{3.9}, $d'(p,q) = d'(p,a) + d'(a,q)$.

\begin{figure}[ht]
   \begin{center}
     \includegraphics[height=2.3cm]{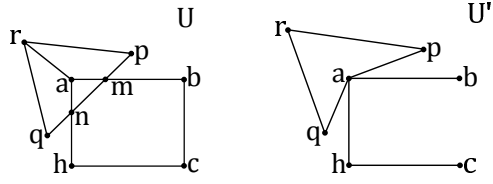}
        \caption{Case A, the quadrilaterals \emph{ramp} and \emph{ranq} are convex}
 \end{center}
\end{figure}

\begin{figure}[ht]
   \begin{center}
     \includegraphics[height=3.3cm]{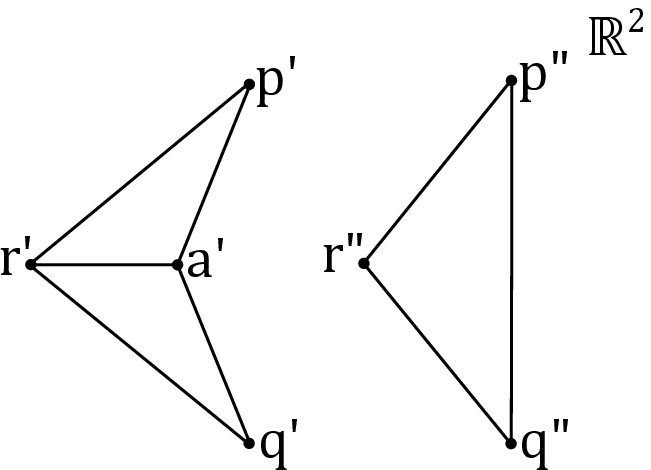}
        \caption{}
 \end{center}
\end{figure}

Let $\triangle(p',a',r')$ be a comparison triangle for $\triangle (p,a,r)$ in $U'$. Let $\triangle(r',a',q')$ be a comparison triangle for $\triangle (r,a,q)$ in $U'$. We place the comparison triangles $\triangle(p',a',r')$ and $\triangle(r',a',q')$ in different half-planes with respect to the line $a'r'$.
The CAT(0) inequality implies that $\angle_{p}(r,a) \leq \angle_{p'}(r',a')$, $\angle_{r}(p,a) \leq \angle_{r'}(p',a')$, $\angle_{r}(a,q) \leq \angle_{r'}(a',q')$, $\angle_{q}(r,a) \leq \angle_{q'}(r',a')$.

Let $\triangle(p'',q'',r'')$ be a comparison triangle for $\triangle (p,q,r)$ in $U'$.
Note that either $\angle_{a'}(p',r') + \angle_{a'}(r',q') \geq \pi$ or $\angle_{r'}(p',a') + \angle_{r'}(a',q') \geq \pi$. Assume w.l.o.g. that $\angle_{a'}(p',r') + \angle_{a'}(r',q') \geq \pi$. Alexandrov's Lemma implies that $\angle_{p'}(r',a') \leq \angle_{p''}(r'',q'')$,
$\angle_{r'}(p',a') +\angle_{r'}(a',q') \leq \angle_{r''}(p'',q'')$, $\angle_{q'}(r',a') \leq \angle_{q''}(r'',p'')$.

In conclusion $\angle_{p}(r,a) \leq \angle_{p''}(r'',q'')$, $\angle_{r}(p,q) \leq \angle_{r}(p,a) + \angle_{r}(a,q) \leq \angle_{r''}(p'',q'')$,
$\angle_{q}(r,a) \leq \angle_{q''}(r'',p'')$. So the geodesic triangle $\triangle (p,r,q)$ in $U'$ satisfies the CAT(0)
inequality.

\end{proof}

\begin{lemma}\label{3.15}
Let the segment $[p,q]$ be as in Case A.
Let $r$ be a point in $U$ such that the geodesic segments $[r,p]$
and $[r,q]$ do not intersect $\sigma$. The point $r$ is considered such that the quadrilateral $\rm{ramp}$ is convex and the quadrilateral $\rm{ranq}$ is concave. Then, the geodesic
triangle $\triangle (p,n,r)$ in $U'$ satisfies the CAT(0)
inequality.
\end{lemma}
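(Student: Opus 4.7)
The plan is to mirror the argument of Lemma \ref{3.13} with $\triangle(p,n,r)$ in $U'$ replacing $\triangle(p,r,q)$, so as to avoid the concave side of the configuration.

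The first step is to identify the three sides of $\triangle(p,n,r)$ in $U'$. The straight segment $[p,n]$ in $U$ enters $\sigma$ through $m\in[a,b]$ and exits at $n\in[a,h]$, hence passes through the interior of $\sigma$; applying Lemma \ref{3.9} with $q$ replaced by $n$ (the proof goes through verbatim, using only that $n\in[a,h]$), the geodesic $[p,n]$ in $U'$ is the concatenation $[p,a]\cup[a,n]$, so $d'(p,n)=d(p,a)+d(a,n)$. The geodesic $[p,r]$ in $U'$ coincides with $[p,r]$ in $U$ by hypothesis. The subtler point is $[n,r]$: I would exploit the concavity of the quadrilateral $\rm{ranq}$ to argue that the CAT(0) geodesic $[n,r]$ in $U$ leaves $n$ into the same open half-space of $[a,h]$ as $r$ and $q$, hence remains outside $\sigma$ and coincides with the geodesic in $U'$.

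Next, I set up two comparison triangles in $\mathds{R}^{2}$: $\triangle(p',a',r')$ for $\triangle(p,a,r)$ and $\triangle(r',a',n')$ for $\triangle(r,a,n)$, placed on opposite sides of the shared edge $a'r'$. The CAT(0) inequality in $U$ yields
\[
\angle_{p}(r,a)\le\angle_{p'}(r',a'),\quad \angle_{n}(r,a)\le\angle_{n'}(r',a'),\quad \angle_{r}(p,a)\le\angle_{r'}(p',a'),\quad \angle_{r}(a,n)\le\angle_{r'}(a',n').
\]
As in the proof of Lemma \ref{3.13}, one of the two sums $\angle_{a'}(p',r')+\angle_{a'}(r',n')$ or $\angle_{r'}(p',a')+\angle_{r'}(a',n')$ is at least $\pi$. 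Assuming w.l.o.g.\ the first, Alexandrov's Lemma \ref{2.47} produces a triangle $\triangle(p'',n'',r'')$ with $d(p'',n'')=d(p',a')+d(a',n')=d'(p,n)$---thus a comparison triangle for $\triangle(p,n,r)$ in $U'$---together with
\[
\angle_{p'}(r',a')\le\angle_{p''}(n'',r''),\quad \angle_{r'}(p',a')+\angle_{r'}(a',n')\le\angle_{r''}(p'',n''),\quad \angle_{n'}(r',a')\le\angle_{n''}(p'',r'').
\]

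Finally, since Alexandrov angles depend only on the germs of the geodesics, $\angle_{p}(r,n)=\angle_{p}(r,a)$ and $\angle_{n}(r,p)=\angle_{n}(r,a)$ (the geodesic $[p,n]$ in $U'$ starts with $[p,a]$ at $p$ and ends with $[a,n]$ at $n$); the subadditivity of Alexandrov angles gives $\angle_{r}(p,n)\le\angle_{r}(p,a)+\angle_{r}(a,n)$. Chaining these with the earlier bounds shows that each of the three angles of $\triangle(p,n,r)$ in $U'$ is at most the corresponding angle of $\triangle(p'',n'',r'')$, and the characterization of CAT(0) via comparison angles then delivers the CAT(0) inequality for $\triangle(p,n,r)$. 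The principal obstacle is the geometric claim in the first step: converting the concavity of $\rm{ranq}$ into the fact that the CAT(0) geodesic $[n,r]$ in $U$ avoids $\sigma$; once this is in hand, the rest is a direct adaptation of Lemma \ref{3.13}.
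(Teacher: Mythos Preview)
Your reduction to Lemma \ref{3.13} rests on the claim that the concavity of the quadrilateral $ranq$ forces the $U$-geodesic $[r,n]$ to leave $n$ on the same side of $[a,h]$ as $q$, hence to avoid $\sigma$. This is the wrong way round. Concavity of $ranq$ (the reflex vertex being at $a$) means that $a$ sits inside the triangle $\triangle(r,n,q)$, so the diagonal $[r,n]$ of the quadrilateral separates $a$ from $q$; since $q$ lies on the non-$\sigma$ side of $[a,h]$, the geodesic $[r,n]$ in $U$ must cross to the $\sigma$ side and enter the interior of $\sigma$. Consequently $[r,n]$ in $U$ is \emph{not} a geodesic of $U'$, and the Alexandrov-gluing argument you copy from Lemma \ref{3.13} cannot be applied: the comparison triangle $\triangle(r',a',n')$ you write down is not a comparison triangle for any geodesic triangle in $U'$, because $d'(r,n)\neq d(r,n)$.

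The paper exploits precisely this fact rather than working around it. By Lemma \ref{3.9} one has $d'(r,n)=d'(r,a)+d'(a,n)$ as well as $d'(p,n)=d'(p,a)+d'(a,n)$, so both sides $[p,n]$ and $[r,n]$ of $\triangle(p,n,r)$ in $U'$ pass through $a$ and share the germ $[a,n]$ at $n$; hence $\angle_{n}(p,r)=0$. For the other two vertices, the paper takes a comparison triangle $\triangle(p',n',r')$ for $\triangle(p,n,r)$ in $U'$ and places a point $a'$ in its interior with $d_{\mathds{R}^{2}}(p',a')=d(p,a)$ and $d_{\mathds{R}^{2}}(r',a')=d(r,a)$ (possible because $a$ lies strictly between $p$ and $n$, and between $r$ and $n$, along the $U'$-geodesics). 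Then $\triangle(p',a',r')$ is congruent to a comparison triangle for $\triangle(p,a,r)$ in $U$, and one reads off $\angle_{p}(n,r)=\angle_{p}(a,r)\le\angle_{p'}(a',r')<\angle_{p'}(n',r')$ and similarly at $r$. So the argument is not an Alexandrov-Lemma gluing at all, but a direct monotonicity comparison using that $a$ lies on \emph{both} sides issuing from $n$.
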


\begin{proof}

By Lemma \ref{3.9}, $d'(p,n) = d'(p,a) + d'(a,n)$ and $d'(r,n) = d'(r,a) + d'(a,n)$.

\begin{figure}[ht]
   \begin{center}
     \includegraphics[height=2.3cm]{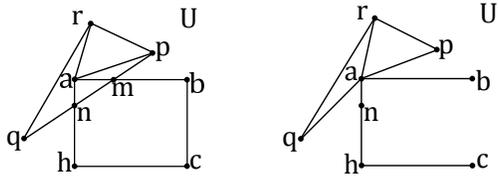}
        \caption{Case A, the quadrilateral  \emph{ramp} is convex and the quadrilateral \emph{ranq} is concave}
 \end{center}
\end{figure}

\begin{figure}[ht]
   \begin{center}
     \includegraphics[height=2.3cm]{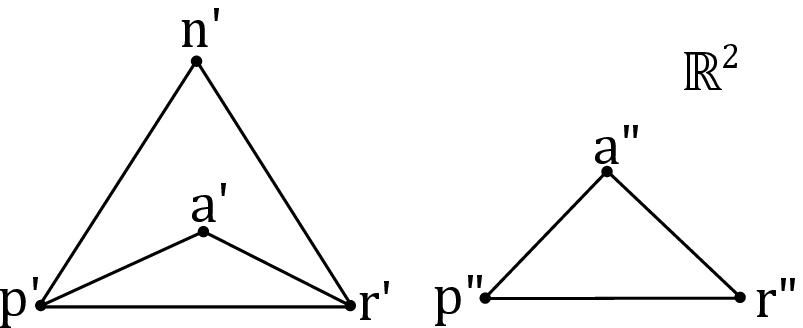}
        \caption{}
 \end{center}
\end{figure}

Let $\triangle(p'',a'',r'')$ be a comparison triangle for $\triangle (p,a,r)$ in $U'$. The CAT(0) inequality implies that $\angle_{p}(a,r) \leq \angle_{p''}(a'',r'')$,
$\angle_{r}(p,a) \leq \angle_{r''}(p'',a'')$.

Let $\triangle(p',n',r')$ be a comparison triangle for $\triangle (p,n,r)$ in $U'$. We consider a point $a'$ in the interior of the geodesic triangle
$\triangle (p',n',r')$ such that $d_{\mathds{R}^{2}}(p',a') =
d(p,a)$  and $d_{\mathds{R}^{2}}(r',a') = d(r,a)$. We can choose the point $a'$ in this manner because in $U'$ we have $a \in [p,n]$ and $a \in [r,n]$. Thus $d(p,a) < d(p,n)$ and $d(r,a) < d(r,n)$. Thus
$\angle_{p'} (a',r') < \angle_{p'} (n',r')$ and $\angle_{r'}
(a',p') < \angle_{r'} (n',p')$. Since the geodesic triangles
$\triangle (p',a',r')$ and $\triangle (p'',a'',r'')$ are congruent
to each other, $\angle_{p'} (a',r') \equiv \angle_{p''} (a'',r'')$
and $\angle_{r'} (a',p') \equiv \angle_{r''} (a'',p'')$.

In conclusion for the geodesic triangle $\triangle(p,n,r)$ in $U'$ we have $\angle_{p}(n,r) < \angle_{p'}(n',r')$, $\angle_{r}(n,p) < \angle_{r'}(n',p')$. Because the Alexandrov angle between the geodesic segments $[p,n]$ and $[r,n]$ equals zero, we have $\angle_{n}(r,p) = 0 < \angle_{n'}(p',r')$. Hence the geodesic
triangle $\triangle (p,n,r)$ in $U'$ satisfies the CAT(0)
inequality.

\end{proof}

\begin{lemma}\label{3.17}
Let the segment $[p,q]$ be as in case B. Let $r$ be a point in $U$ such that the geodesic segments  $[r,p]$
and $[r,q]$ do not intersect $\sigma$. Also $r$ is chosen such that either the
quadrilaterals $\rm{ramp}$ and $\rm{rhnq}$ are concave or the
quadrilateral $\rm{ramp}$ is convex and the quadrilateral $\rm{rhnq}$ are concave. Then, the geodesic
triangles $\triangle (p,r,q)$ and $\triangle (p,n,r)$ in $U'$ satisfy the CAT(0)
inequality.
\end{lemma}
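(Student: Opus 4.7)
My plan is to adapt the arguments of Lemmas \ref{3.13} and \ref{3.15}, which handled the convex/convex and convex/concave subcases of Case A, to the present Case B. The principal new feature is that Lemma \ref{3.11} now places two interior vertices, $a$ and $h$, on the geodesic $[p,q]$ in $U'$, so the comparison argument must be iterated through two gluings rather than a single split at $a$.

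First I would treat $\triangle(p,r,q)$. The segments $[r,a]$ and $[r,h]$ reach vertices on the boundary of $\sigma$ and can be realized inside $U$ avoiding the open cell $\sigma$, so they lie in $U'$ as well. This lets me decompose $\triangle(p,r,q)$ into three geodesic sub-triangles $\triangle(p,a,r)$, $\triangle(a,h,r)$, $\triangle(h,q,r)$, each of which is a genuine geodesic triangle in the CAT($0$) space $U$. I would take a Euclidean comparison triangle for each and place them in $\mathds{R}^{2}$ sharing the edges $[a',r']$ and $[h',r']$, with $p',a',h',q'$ all on the same side of $r'$. Two successive applications of Alexandrov's Lemma \ref{2.47}, first at $h'$ and then at $a'$, straighten the broken line $p'a'h'q'$ into a single Euclidean segment of length $d'(p,a)+d'(a,h)+d'(h,q) = d'(p,q)$, producing a comparison triangle for $\triangle(p,r,q)$ in $U'$. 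The angle bounds given by Alexandrov's Lemma combine with the CAT($0$) angle comparison applied in $U$ to each sub-triangle, and together they dominate the Alexandrov angles at $p$, $r$ and $q$ in $U'$; this is exactly the angle-comparison form of the CAT($0$) condition.

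The triangle $\triangle(p,n,r)$ requires a mild modification. Applying Lemma \ref{3.11} to the pair $(p,n)$ gives $[p,n] = [p,a]\cup[a,h]\cup[h,n]$ in $U'$; the concavity of $\mathrm{rhnq}$, interpreted as the analogue of Case A for the pair $(r,n)$ around the vertex $h$, yields $d'(r,n) = d'(r,h) + d'(h,n)$, so $h$ lies on two of the sides and the Alexandrov angle at $n$ vanishes. I would collapse this degeneracy by reducing to the triangle $\triangle(p,h,r)$ in $U'$: in the second subcase, where $\mathrm{ramp}$ is convex and $[r,p]$ is a direct geodesic, a single gluing at $a$ and one application of Alexandrov's Lemma suffice; in the first subcase, where $\mathrm{ramp}$ is also concave and an analogous detour of $[r,m]$ through $a$ occurs, the triangle becomes doubly degenerate, and I would build the comparison triangle by the interior-comparison-point construction used in the proof of Lemma \ref{3.15}.

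The main technical obstacle will be the iterated use of Alexandrov's Lemma. After the first gluing-and-straightening step the hypothesis $\gamma + \gamma' \geq \pi$ needed for the next application is not automatic, and verifying it requires combining the CAT($0$) inequality applied in $U$ to the intermediate triangle $\triangle(a,h,r)$ with the concavity hypothesis on $\mathrm{rhnq}$ (and, in the first subcase, on $\mathrm{ramp}$). A secondary point to monitor is that the three Alexandrov angles at $r$ in the sub-triangles $\triangle(p,a,r)$, $\triangle(a,h,r)$, $\triangle(h,q,r)$ sum to at least $\angle_{r}(p,q)$ in $U'$, which follows from the subadditivity of Alexandrov angles in CAT($0$) spaces but must be tracked so that the final comparison lands on the right inequality.
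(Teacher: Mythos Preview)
Your proposal is correct and follows essentially the same route as the paper: for $\triangle(p,r,q)$ the paper also decomposes into $\triangle(p,a,r)$, $\triangle(a,h,r)$, $\triangle(h,q,r)$ and applies Alexandrov's Lemma twice (gluing at $a$ first, then at $h$, rather than your $h$-then-$a$ order), handling your ``main obstacle'' simply by observing that in the glued Euclidean picture the angle sums at the two endpoints of the shared edge total $2\pi$, so at least one of them is $\geq \pi$. For $\triangle(p,n,r)$ the paper is slightly less elaborate than you: it does not split into your two subcases but applies the interior-comparison-point trick of Lemma~\ref{3.15} directly with the single point $a$, using $d(p,a)<d'(p,n)$ and $d(r,a)<d'(r,n)$ to place $a'$ inside the comparison triangle.
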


\begin{proof}

By Lemma \ref{3.11}, $d'(p,q) = d'(p,a) + d'(a,h) + d'(h,q)$, $d'(p,n) = d'(p,a) + d'(a,h) + d'(h,n)$. By Lemma \ref{3.9},  $d'(r,n) = d'(r,h) + d'(h,n)$.

\begin{figure}[ht]
   \begin{center}
     \includegraphics[height=3cm]{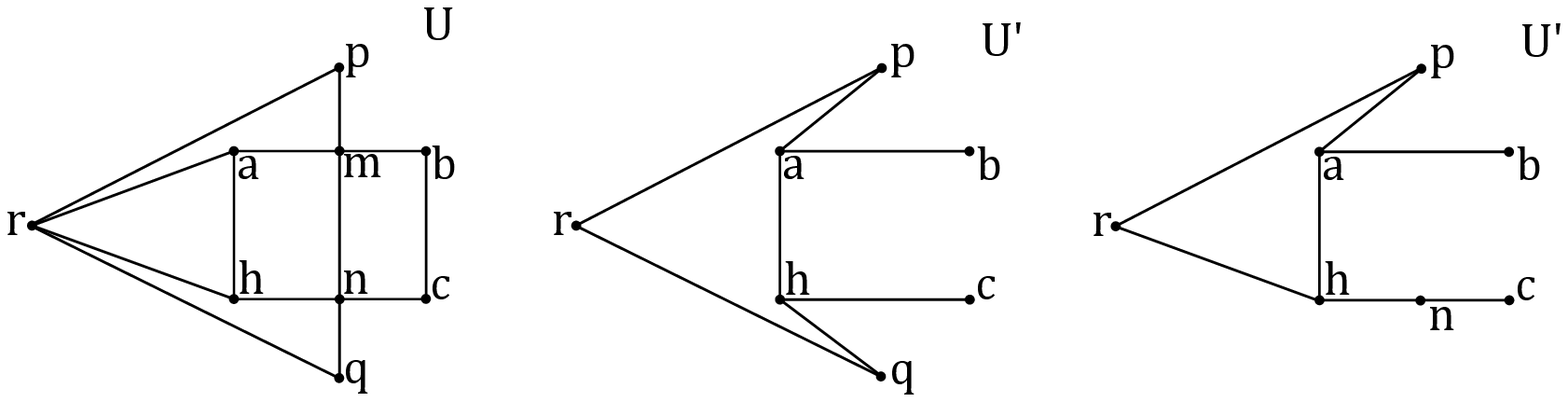}
        \caption{Case B, the quadrilaterals \emph{ramp} and \emph{rhnq} are concave}
 \end{center}
\end{figure}

\begin{figure}[ht]
   \begin{center}
     \includegraphics[height=3.5cm]{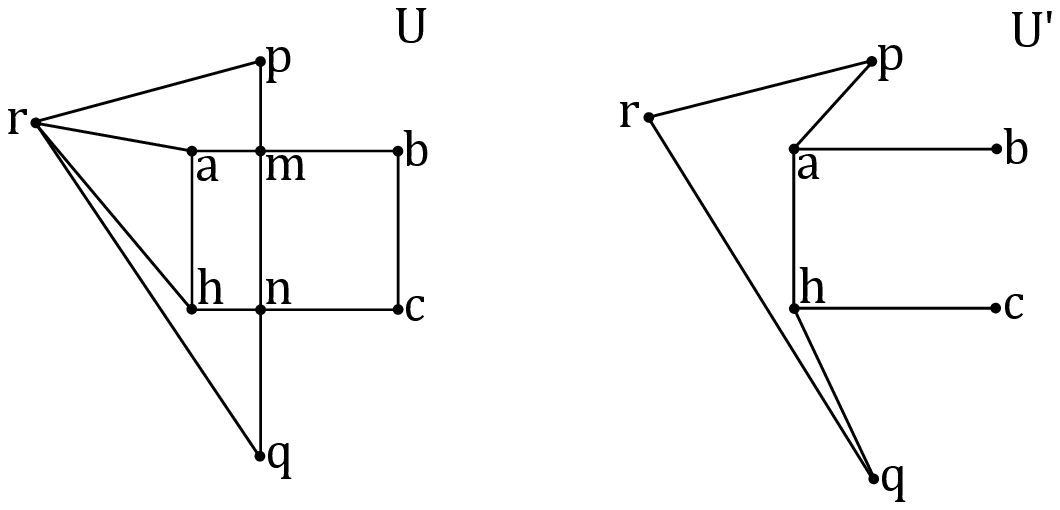}
        \caption{Case B, the quadrilateral \emph{ramp} is convex and the quadrilateral \emph{rhnq} is concave}
 \end{center}
\end{figure}

\begin{figure}[ht]
   \begin{center}
     \includegraphics[height=3.5cm]{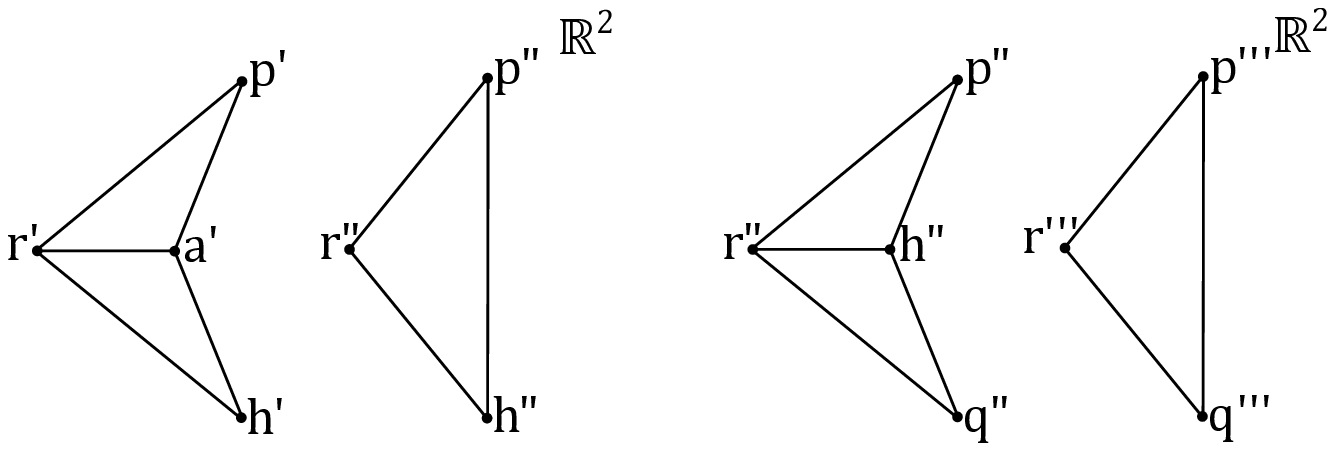}
        \caption{}
 \end{center}
\end{figure}

Let $\triangle(p',a',r')$ be a comparison triangle for $\triangle(p,a,r)$ in $U$. Let $\triangle(r',a',h')$ be a comparison triangle for $\triangle(r,a,h)$ in $U$.
We place the comparison triangles $\triangle(p',a',r')$ and $\triangle(r',a',h')$ in different half planes with respect to the line $r'a'$. The CAT(0) inequality implies that
$\angle_{p}(r,a) \leq \angle_{p'}(r',a')$, $\angle_{r}(p,a) \leq \angle_{r'}(p',a')$, $\angle_{r}(a,h) \leq \angle_{r'}(a',h')$.

Let $\triangle(p'',r'',h'')$ be a comparison triangle for $\triangle(p,r,h)$ in $U$.
Let $\triangle(q'',r'',h'')$ be a comparison triangle for $\triangle(q,r,h)$ in $U$. We place the comparison triangles $\triangle(p'',r'',h'')$ and $\triangle(q'',r'',h'')$ in different half planes with respect to the line $r''h''$.
Note that either $\angle_{a'}(p',r') + \angle_{a'}(r',h') \geq \pi$ or $\angle_{r'}(p',a') + \angle_{r'}(a',h') \geq \pi$. Assume w.l.o.g. $\angle_{a'}(p',r') + \angle_{a'}(r',h') \geq \pi$. Alexandrov's Lemma implies that $\angle_{p'}(a',r') \leq \angle_{p''}(h'',r'')$, $\angle_{r'}(p',a') + \angle_{r'}(a',h') \leq \angle_{r''}(p'',h'')$. Hence $\angle_{r}(p,h) \leq \angle_{r}(p,a) + \angle_{r}(a,h) \leq \angle_{r'}(p',a') + \angle_{r'}(a',h') \leq \angle_{r''}(p'',h'')$.

 The CAT(0) inequality implies that $\angle_{r}(h,q) \leq \angle_{r''}(h'',q'')$, $\angle_{q}(r,h) \leq \angle_{q''}(r'',h'')$, $\angle_{r}(p,q) \leq \angle_{r}(p,h) + \angle_{r}(h,q) \leq \angle_{r''}(p'',h'') + \angle_{r''}(h'',q'') = \angle_{r''}(p'',q'')$.

Let $\triangle(p''',r''',q''')$ be a comparison triangle for $\triangle(p,r,q)$ in $U$.
Note that either $\angle_{h''}(p'',r'') + \angle_{h''}(r'',q') \geq \pi$ or $\angle_{r''}(p'',h'') + \angle_{r''}(h'',q') \geq \pi$. Assume w.l.o.g. $\angle_{h''}(p'',r'') + \angle_{h''}(r'',q') \geq \pi$. Alexandrov's Lemma implies that $\angle_{r''}(p'',h'') + \angle_{r''}(h'',q'') \leq \angle_{r'''}(p''',q''')$, $\angle_{p''}(r'',h'') \leq \angle_{p'''}(r''',q''')$, $\angle_{q''}(r'',h'') \leq \angle_{q'''}(r''',p''')$.

Hence $\angle_{p}(r,q) = \angle_{p}(r,a) \leq \angle_{p'}(r',a') \leq \angle_{p''}(r'',h'') \leq \angle_{p'''}(r''',q''')$, $\angle_{q}(r,h) \leq \angle_{q''}(r'',h'') \leq \angle_{q'''}(r''',p''')$, $\angle_{r}(p,a) + \angle_{r}(a,h) \leq \angle_{r'}(p',a') + \angle_{r'}(a',h') \leq \angle_{r''}(p'',h'')$, $\angle_{r}(p,q) \leq \angle_{r}(p,a) + \angle_{r}(a,h) + \angle_{r}(h,q) \leq \angle_{r''}(p'',h'') + \angle_{r''}(h'',q'') \leq \angle_{r'''}(p''',q''')$.
So the geodesic
triangle $\triangle (p,r,q)$ in $U'$ satisfies the CAT(0)
inequality.

We show further that the geodesic
triangle $\triangle (p,n,r)$ in $U'$ satisfies the CAT(0)
inequality.

\begin{figure}[ht]
   \begin{center}
     \includegraphics[height=2.3cm]{figura6a.eps}
        \caption{}
 \end{center}
\end{figure}

Let $\triangle(p'',a'',r'')$ be a comparison triangle for $\triangle (p,a,r)$ in $U'$. The CAT(0) inequality implies that $\angle_{p}(a,r) \leq \angle_{p''}(a'',r'')$,
$\angle_{r}(p,a) \leq \angle_{r''}(p'',a'')$.

Let $\triangle(p',n',r')$ be a comparison triangle for $\triangle (p,n,r)$ in $U'$. We consider a point $a'$ in the interior of the geodesic triangle
$\triangle (p',n',r')$ such that $d_{\mathds{R}^{2}}(p',a') =
d(p,a)$  and $d_{\mathds{R}^{2}}(r',a') = d(r,a)$. Thus
$\angle_{p'} (a',r') \leq \angle_{p'} (n',r')$ and $\angle_{r'}
(a',p') \leq \angle_{r'} (n',p')$. Because the geodesic triangles
$\triangle (p',a',r')$ and $\triangle (p'',a'',r'')$ are congruent
to each other, we have $\angle_{p'} (a',r') \equiv \angle_{p''} (a'',r'')$
and $\angle_{r'} (a',p') \equiv \angle_{r''} (a'',p'')$.

In conclusion for the geodesic triangle $\triangle(p,n,r)$ in $U'$ we have $\angle_{p}(n,r) < \angle_{p'}(n',r')$, $\angle_{r}(n,p) < \angle_{r'}(n',p')$. Because the Alexandrov angle between the geodesic segments $[p,n]$ and $[r,n]$ equals zero, we have
$\angle_{n}(r,p) = 0 < \angle_{n'}(p',r')$. Hence the geodesic
triangle $\triangle (p,n,r)$ in $U'$ satisfies the CAT(0)
inequality.

\end{proof}

\begin{lemma}\label{3.21}
Let the segment $[p,q]$ be as in case B. Let $r$ be a point in $U$ such that the segment $[r,q]$ does not intersect $\sigma$ and the segment $[p,r]$ intersects $\sigma$ ($[p,r] \cap [a,b] \neq \emptyset$, $[p,r] \cap [c,h] \neq \emptyset$). Then, the geodesic
triangle $\triangle (p,r,q)$ in $U'$ satisfies the CAT(0) inequality.
\end{lemma}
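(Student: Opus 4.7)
The plan is to verify the three vertex-angle comparisons that constitute the CAT(0) inequality for $\triangle(p,r,q)$ in $U'$, in the same spirit as Lemmas \ref{3.13}--\ref{3.17}. First I would apply Lemma \ref{3.11} to the pair $(p,q)$ (in Case B by assumption) and also to the pair $(p,r)$, since the hypothesis $[p,r]\cap[a,b]\neq\emptyset$ and $[p,r]\cap[c,h]\neq\emptyset$ places $(p,r)$ in Case B as well. This yields that the geodesic $[p,q]$ in $U'$ with respect to $d'$ is $[p,a]\cup[a,h]\cup[h,q]$ and the geodesic $[p,r]$ in $U'$ with respect to $d'$ is $[p,a]\cup[a,h]\cup[h,r]$, while the $U'$-geodesic $[r,q]$ coincides with the $U$-geodesic since $[r,q]$ does not meet $\sigma$. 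Writing $\ell:=d(p,a)+d(a,h)$, the side lengths of $\triangle(p,r,q)$ in $U'$ are $d'(p,r)=\ell+d(h,r)$, $d'(p,q)=\ell+d(h,q)$, and $d'(r,q)=d(r,q)$.

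The key structural observation I would exploit is that the two sides emanating from $p$ share the common initial arc $[p,a]$, so their germs at $p$ coincide and hence $\angle_{p}(r,q)=0$ in $U'$, which is trivially dominated by the comparison angle at $\bar{p}$. For the angle at $r$, the germ at $r$ of the $U'$-geodesic $[r,p]$ coincides with that of $[r,h]$, so $\angle_{r}(p,q)=\angle_{r}(h,q)$. The geodesic triangle $\triangle(h,r,q)$ lies in $U$ with all three of its sides unaffected by the collapse, hence satisfies the CAT(0) inequality in the CAT(0) space $U$: letting $\triangle(\tilde{h},\tilde{r},\tilde{q})\subset\mathds{R}^{2}$ be its comparison triangle, $\angle_{r}(h,q)\leq\tilde{\angle}_{\tilde{r}}(\tilde{h},\tilde{q})$.

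To relate this Euclidean angle to the comparison angle in the comparison triangle $\triangle(\bar{p},\bar{r},\bar{q})$ for $\triangle(p,r,q)$ in $U'$, I would place $\tilde{p}$ in $\mathds{R}^{2}$ on the extension of $[\tilde{r},\tilde{h}]$ beyond $\tilde{h}$, at distance $\ell$ from $\tilde{h}$. Then $|\tilde{r}\tilde{p}|=d(h,r)+\ell=d'(p,r)$, the Euclidean triangle inequality in $\triangle(\tilde{p},\tilde{h},\tilde{q})$ gives $|\tilde{p}\tilde{q}|\leq\ell+d(h,q)=d'(p,q)$, and $|\tilde{r}\tilde{q}|=d(r,q)=d'(r,q)$. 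Thus $\triangle(\tilde{p},\tilde{r},\tilde{q})$ shares two side lengths with $\triangle(\bar{p},\bar{r},\bar{q})$ but has a shorter (or equal) third side, so by monotonicity of the Euclidean angle in the opposite side, $\tilde{\angle}_{\tilde{r}}(\tilde{p},\tilde{q})\leq\bar{\angle}_{\bar{r}}(\bar{p},\bar{q})$. Since $\tilde{p}$ lies on the ray from $\tilde{r}$ through $\tilde{h}$, $\tilde{\angle}_{\tilde{r}}(\tilde{p},\tilde{q})=\tilde{\angle}_{\tilde{r}}(\tilde{h},\tilde{q})$, and chaining the inequalities gives $\angle_{r}(p,q)\leq\bar{\angle}_{\bar{r}}(\bar{p},\bar{q})$. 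A symmetric construction extending $[\tilde{q},\tilde{h}]$ beyond $\tilde{h}$ yields $\angle_{q}(p,r)\leq\bar{\angle}_{\bar{q}}(\bar{p},\bar{r})$, completing the three required angle comparisons.

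The main obstacle, and the reason this case was not handled together with Lemma \ref{3.17}, is that $[p,r]$ itself cuts through the excised cell $\sigma$, so the $U'$-geodesic from $p$ to $r$ is a broken polygonal path rather than a single segment of $U$, and one cannot work directly with a comparison triangle for $\triangle(p,r,h)$ in $U$ as in the earlier lemmas. The trick is to notice that appending the common prefix $[p,a]\cup[a,h]$ to both $[h,r]$ and $[h,q]$ corresponds in the Euclidean model to a collinear extension from $\tilde{h}$, so that the honest comparison triangle for the $U$-triangle $\triangle(h,r,q)$ controls the comparison angles of the enlarged triangle $\triangle(p,r,q)$ at both $r$ and $q$, while the angle at $p$ is forced to vanish by the shared germ.
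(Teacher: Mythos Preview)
Your argument is correct and follows essentially the same approach as the paper: both proofs invoke Lemma~\ref{3.11} to see that the $U'$-geodesics $[p,r]$ and $[p,q]$ share the initial arc $[p,a]\cup[a,h]$ (forcing $\angle_{p}(r,q)=0$), then control the angles at $r$ and $q$ via the CAT(0) inequality for the $U$-triangle $\triangle(h,r,q)$ and a Euclidean comparison with the $U'$-comparison triangle $\triangle(\bar p,\bar r,\bar q)$. The only cosmetic difference is in that Euclidean step---the paper places a point $h'$ \emph{inside} $\triangle(\bar p,\bar r,\bar q)$ at distances $d(r,h)$, $d(q,h)$ from $\bar r$, $\bar q$ and uses interior-angle monotonicity, whereas you extend the comparison triangle for $\triangle(h,r,q)$ \emph{outward} by a collinear point $\tilde p$ and use law-of-cosines monotonicity; these are equivalent elementary manoeuvres.
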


\begin{proof}

By Lemma \ref{3.11}, $d'(p,q) = d'(p,a) + d'(a,h) + d'(h,q)$ and $d'(p,r) = d'(p,a) + d'(a,h) + d'(h,r)$.

\begin{figure}[ht]
   \begin{center}
     \includegraphics[height=3.5cm]{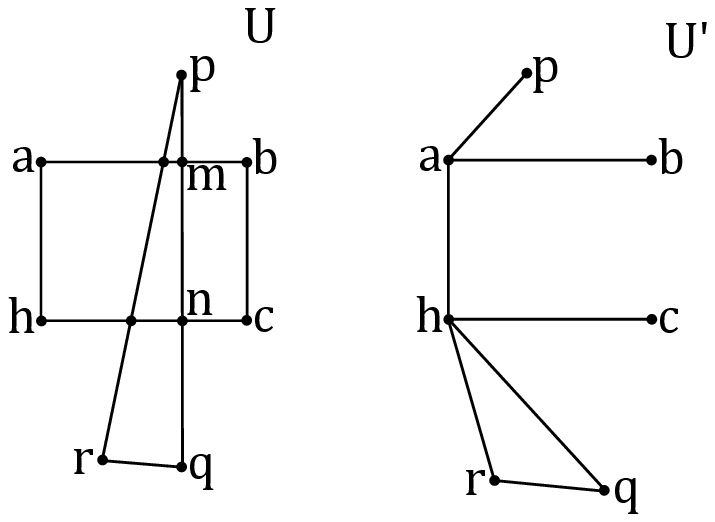}
        \caption{Case B}
 \end{center}
\end{figure}

\begin{figure}[ht]
   \begin{center}
     \includegraphics[height=2.5cm]{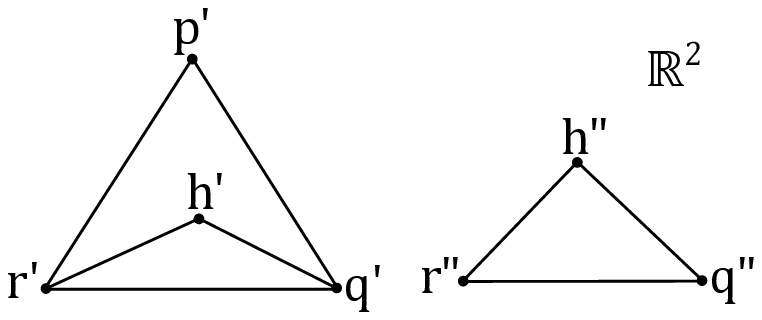}
        \caption{}
 \end{center}
\end{figure}

Let $\triangle(r'', h'', q'')$ be a comparison triangle for $\triangle(r, h, q)$ in $U$. Because $U$ is a CAT(0) space, we have $\angle_{r}(h,q) \leq \angle_{r''}(h'',q'')$,
$\angle_{q}(h,r) \leq \angle_{q''}(h'',r'')$.

Let $\triangle(p', q', r')$ be a comparison triangle for $\triangle(p, q, r)$ in $U$.
We consider a point $h'$ in the interior of the geodesic triangle
$\triangle (p',q',r')$ such that $d_{\mathds{R}^{2}}(r',h') =
d(r,h)$ and $d_{\mathds{R}^{2}}(q',h') = d(q,h)$. Thus
$\angle_{r'} (h',q') \leq \angle_{r'} (p',q')$ and $\angle_{q'}
(h',r') \leq \angle_{q'} (p',r')$. Because the geodesic triangles
$\triangle (r',h',q')$ and $\triangle (r'',h'',q'')$ are congruent
to each other, we have $\angle_{r'} (h',q') \equiv \angle_{r''} (h'',q'')$, $\angle_{q'} (h',r') \equiv \angle_{q''} (h'',r'')$.

In conclusion in $U'$ we have $\angle_{r}(p,q) \leq \angle_{r'}(p',q')$,
$\angle_{q}(p,r) \leq \angle_{q'}(p',r')$,
$\angle_{p}(r,q) = 0 < \angle_{p'}(r',q')$.
Thus the geodesic triangle $\triangle (p,r,q)$ in $U'$ satisfies the CAT(0)
inequality.

\end{proof}

\begin{lemma}\label{3.23}
Let the segment $[p,q]$ be as in case B. Let $r$ be a point in $U$ such that the segments $[p,r]$ and $[r,q]$ intersect $\sigma$ ($[p,r] \cap [a,b] \neq \emptyset$, $[p,r] \cap [a,h] \neq \emptyset$, $[r,q] \cap [a,h] \neq \emptyset$, $[r,q] \cap [c,h] \neq \emptyset$). Then, the geodesic
triangle $\triangle (p,r,q)$ in $U'$ satisfies the CAT(0) inequality.
\end{lemma}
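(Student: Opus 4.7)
The plan is to identify the three sides of the geodesic triangle $\triangle(p,r,q)$ in $U'$ using the geodesic descriptions already in place, then verify the Alexandrov angle-comparison form of the CAT(0) inequality at each of its three vertices. The angles at $p$ and $q$ will turn out to vanish for free, so the only real work will be bounding the angle at $r$.

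First I would determine the three geodesic sides in $U'$. By hypothesis $[p,r]$ crosses the two edges of $\sigma$ incident to $a$, so the pair $(p,r)$ falls under Case A and Lemma \ref{3.9} gives $[p,r]=[p,a]\cup[a,r]$ in $U'$ with $d'(p,r)=d(p,a)+d(a,r)$. Symmetrically $[r,q]$ crosses the two edges of $\sigma$ incident to $h$, and the same lemma with $h$ in place of $a$ gives $[r,q]=[r,h]\cup[h,q]$ with $d'(r,q)=d(r,h)+d(h,q)$. Lemma \ref{3.11} already gives $[p,q]=[p,a]\cup[a,h]\cup[h,q]$ with $d'(p,q)=d(p,a)+d(a,h)+d(h,q)$. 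Since the two geodesics leaving $p$ in $U'$ share their initial segment $[p,a]$, the germ invariance of Alexandrov angles forces $\angle_p(r,q)=0$, and symmetrically $\angle_q(p,r)=0$; both are trivially at most the corresponding comparison angles. At $r$, the geodesics to $p$ and $q$ leave along $[r,a]$ and $[r,h]$ respectively, so $\angle_r(p,q)$ in $U'$ coincides with the genuine Alexandrov angle $\angle_r(a,h)$ in $U$.

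The main step is to show $\angle_r(a,h)\le\angle_{r'}(p',q')$, where $\triangle(p',r',q')$ is the Euclidean comparison triangle for $\triangle(p,r,q)$ in $U'$. My plan is to build an auxiliary configuration in $\mathbb{R}^2$: take a comparison triangle $\triangle(r_0,a_0,h_0)$ for $\triangle(r,a,h)$ in $U$, extend the ray from $r_0$ through $a_0$ to a point $p_0$ with $|p_0a_0|=d(p,a)$, and extend the ray from $r_0$ through $h_0$ to a point $q_0$ with $|q_0h_0|=d(h,q)$. By the CAT(0) property of $U$ applied to $\triangle(r,a,h)$, $\angle_r(a,h)\le\angle_{r_0}(a_0,h_0)$, and by the straight-line construction $\angle_{r_0}(p_0,q_0)=\angle_{r_0}(a_0,h_0)$, $|p_0r_0|=d'(p,r)$, and $|q_0r_0|=d'(r,q)$. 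The triangle inequality applied to the polygonal path $p_0\to a_0\to h_0\to q_0$ in $\mathbb{R}^2$ yields $|p_0q_0|\le d(p,a)+d(a,h)+d(h,q)=|p'q'|$, and since in a Euclidean triangle with two fixed sides at a vertex the angle at that vertex is monotone non-decreasing in the opposite side (law of cosines), $\angle_{r_0}(p_0,q_0)\le\angle_{r'}(p',q')$. Chaining gives $\angle_r(p,q)\le\angle_{r'}(p',q')$, which combined with the vanishing angles at $p$ and $q$ establishes the CAT(0) inequality for $\triangle(p,r,q)$ in $U'$.

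The hard part is precisely this last step: the CAT(0) bound available in $U$ is for the small triangle $\triangle(r,a,h)$, while the CAT(0) bound needed in $U'$ is for the much larger triangle $\triangle(p,r,q)$, whose two $r$-sides each detour through a vertex of $\sigma$. The auxiliary Euclidean configuration encapsulates this passage in one stroke. An alternative closer to the style of Lemma \ref{3.17} would apply Alexandrov's Lemma \ref{2.47} twice --- first straightening at $a$, then at $h$ --- to build the comparison triangle $\triangle(p',r',q')$ incrementally from $\triangle(r,a,h)$.
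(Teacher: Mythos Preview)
Your proof is correct and follows essentially the same route as the paper: identify the $U'$-geodesics via Lemmas \ref{3.9} and \ref{3.11}, observe that the angles at $p$ and $q$ vanish, and bound the angle at $r$ by first invoking CAT(0) in $U$ on the inner triangle $\triangle(r,a,h)$ and then applying Euclidean law-of-cosines monotonicity to pass to the large comparison triangle $\triangle(p',r',q')$. The only cosmetic difference is the direction of the Euclidean construction: the paper starts from the big comparison triangle $\triangle(p',r',q')$, marks comparison points $a'\in[p',r']$ and $h'\in[r',q']$, and uses the Euclidean triangle inequality on the broken path $p'\to a'\to h'\to q'$ to get $|a'h'|\ge d(a,h)$, whereas you start from the small comparison triangle $\triangle(r_0,a_0,h_0)$, extend outward to $p_0,q_0$, and use the triangle inequality on $p_0\to a_0\to h_0\to q_0$ to get $|p_0q_0|\le d'(p,q)$; these are dual realisations of the same inequality.
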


\begin{proof}

By Lemma \ref{3.9}, $d'(p,r) = d'(p,a) + d'(a,r)$ and $d'(r,q) = d'(r,h) + d'(h,q)$.
Lemma \ref{3.11} implies that $d'(p,q) = d'(p,a) + d'(a,h) + d'(h,q)$.

Since $d'(a,h) < d'(a,r) + d'(r,h)$, we have that $d'(p,q) < d'(p,r) + d'(r,q)$. So in $U'$ the geodesic triangle $\triangle (p,q,r)$ is well defined.

Let $\triangle(p',q',r')$ be a comparison triangle for $\triangle(p,q,r)$ in $U'$. Let $\triangle(r'',a'',h'')$ be a comparison triangle for $\triangle(r,a,h)$ in $U$. Let $a' \in [p',r']$ be a comparison point for $a \in [p,r]$. Let $h' \in [r',q']$ be a comparison point for $h \in [r,q]$. Note that in $U'$ we have $0 = \angle_{p}(r,q) < \angle_{p'}(r',q')$ and $0 = \angle_{q}(r,p) < \angle_{q'}(r',p')$.

\begin{figure}[ht]
   \begin{center}
     \includegraphics[height=2.5cm]{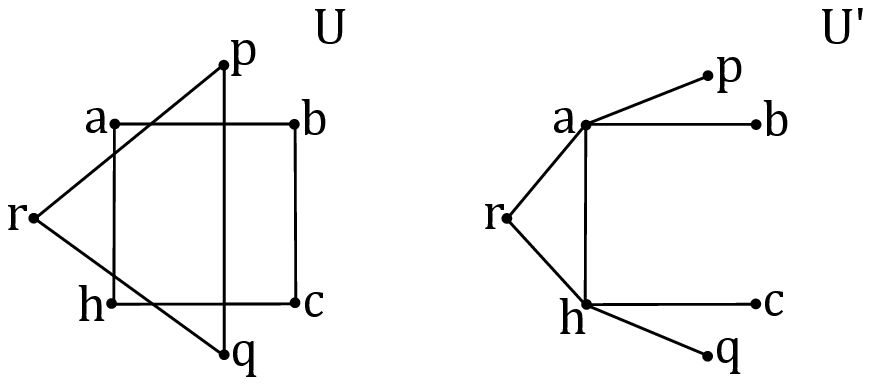}
        \caption{Case B}
 \end{center}
\end{figure}

\begin{figure}[ht]
   \begin{center}
     \includegraphics[height=2.5cm]{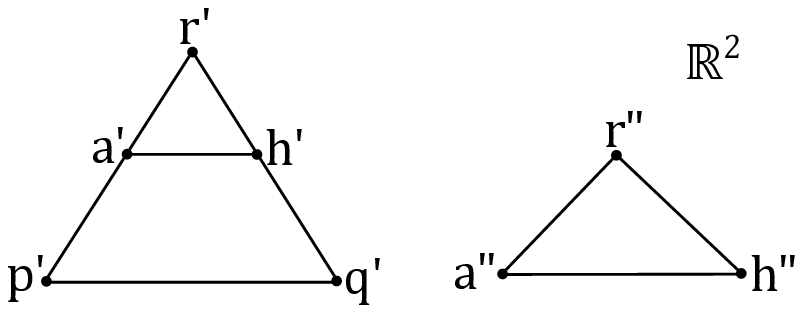}
        \caption{}
 \end{center}
\end{figure}

By the CAT(0) inequality we have that $d(a,h) \leq d_{\mathds{R}^{2}}(a',h')$. Hence, since $d_{\mathds{R}^{2}}(a'',h'') = d(a,h)$, we have $d_{\mathds{R}^{2}}(a'',h'')$ $\leq d_{\mathds{R}^{2}}(a',h')$. Then, because $d_{\mathds{R}^{2}}(a',r') = d_{\mathds{R}^{2}}(a'',r'')$ and $d_{\mathds{R}^{2}}(h',r') = d_{\mathds{R}^{2}}(h'',r'')$, we have $\angle_{r''}(a'',h'') \leq \angle_{r'}(a',h')$.

The CAT(0) inequality implies in $U'$ we have that $\angle_{r}(p,q) = \angle_{r}(a,h) \leq \angle_{r''}(a'',h'') \leq \angle_{r'}(a',h') = \angle_{r'}(p',q')$.

So the geodesic
triangle $\triangle (p,r,q)$ in $U'$ satisfies the CAT(0) inequality.

\end{proof}

\begin{proposition}\label{3.25}
Every point in $|K'|$ has a neighborhood that is a CAT(0) space.
\end{proposition}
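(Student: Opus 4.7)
The plan is to reduce the statement to showing that $U'$ with its induced metric $d'$ is CAT(0) (or at least locally CAT(0) at every one of its points). Any point $x \in |K'| \setminus U'$ coincides with a point of $|K| \setminus U$, and a sufficiently small ball around $x$ in $|K'|$ equals the corresponding ball in $|K|$; since $|K|$ is CAT(0) it is locally CAT(0), and such points therefore admit CAT(0) neighborhoods immediately. Hence the real task is to produce, for each $x \in U'$, a $d'$-neighborhood of $x$ that is a CAT(0) space.

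First I would verify that $U'$ is a geodesic metric space under $d'$. For a pair $p,q \in U'$ whose $U$-geodesic $[p,q]$ avoids $\sigma$, this same segment remains a $d'$-geodesic in $U'$, because removing the open $2$-cell $\sigma$ and the free edge $e$ does not alter the metric off $\sigma$. For a pair whose $U$-geodesic meets the interior of $\sigma$, the configuration is exactly Case A or Case B from the setup, and Lemmas \ref{3.9} and \ref{3.11} provide the explicit $d'$-geodesic, namely $[p,a] \cup [a,q]$ in Case A and $[p,a] \cup [a,h] \cup [h,q]$ in Case B. Together these alternatives cover every pair in $U'$, so $U'$ is a geodesic space.

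By the characterization theorem of CAT(0) spaces stated earlier in the paper (equivalence of $X$ being CAT(0) with the distance comparison inequality holding on every geodesic triangle), it suffices to verify the CAT(0) inequality for every geodesic triangle $\triangle(p,q,r)$ in $U'$. I would split into cases according to the interaction of the three sides of the triangle with $\sigma$. If none of the sides crosses the interior of $\sigma$, then the triangle coincides with a geodesic triangle of $U$, and the CAT(0) inequality is inherited from $U$. If one or more sides bends around $\sigma$, I would further classify by whether each bending side realizes Case A or Case B, and by whether the auxiliary quadrilaterals \emph{ramp}, \emph{ranq}, \emph{rhnq} are convex or concave. Every such configuration is treated by one of Lemmas \ref{3.13}, \ref{3.15}, \ref{3.17}, \ref{3.21} and \ref{3.23}, each of which already produces the required comparison-angle bound.

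The main obstacle is to confirm that the case-enumeration above is genuinely exhaustive. Three points $p,q,r$ of $U'$ can lie in many relative positions with respect to the vertices $a,b,c,h$ of $\sigma$ and the edges $[a,b], [a,h], [c,h]$ that survive the collapse; one must argue that, up to relabelling and the obvious symmetry interchanging $b$ and $c$, every such configuration matches the hypotheses of exactly one of Lemmas \ref{3.13}--\ref{3.23}. Once this bookkeeping is in place, a sufficiently small $d'$-ball around any $x \in U'$ is CAT(0), and the proposition follows.
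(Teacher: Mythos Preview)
Your approach matches the paper's proof essentially verbatim: handle points away from $\sigma$ by taking small balls in $|K|$ that miss $\sigma$, and for points on the remaining edges of $\sigma$ show that $U'$ itself is CAT(0) by observing that triangles whose sides avoid $\sigma$ inherit the inequality from $U$ while the remaining configurations are covered by Lemmas \ref{3.13}--\ref{3.23}. You are even slightly more careful than the paper in explicitly noting that $U'$ must first be shown geodesic via Lemmas \ref{3.9}--\ref{3.11} and in flagging the exhaustiveness of the case split; the paper simply asserts that the lemmas cover every triangle.
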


\begin{proof}

Let $u,v,w$ be three distinct points in $U$ chosen such that they do
not belong to $\sigma$ and such that the geodesic segments $[u,v]$,
$[u,w]$ and $[v,w]$ in $U$ do not intersect $\sigma$. Note that the
geodesic triangle $\triangle (u,v,w)$ in $U'$ satisfies the CAT(0)
inequality. Hence, due to the Lemmas \ref{3.13} - \ref{3.23}, we may conclude that any geodesic triangle in $U'$
fulfills the CAT(0) inequality. So $U'$ is a CAT(0) space.

Let $y$ be a point in $|K|$ that does not belong to $\sigma$. Let
$U_{y}$ be a neighborhood of $y$ homeomorphic to a closed ball of
radius $r_{y}$, $U_{y} = \{x \in |K| \mid d(y,x) \leq r_{y}\}$. The
radius $r_{y}$ is chosen small enough such that $U_{y}$ does not
intersect $\sigma$. For any $y$ in $|K'|$ that does not lie on
$[a,b], [a,c]$ or $[a,h]$, we consider a neighborhood $U_{y}'$ that
coincides with $U_{y}$. $U'_{y}$ is hence a CAT(0) space.

So every point in $|K'|$ has a neighborhood which is a CAT(0) space.

\end{proof}

We are now in the position to show the main result of the paper: any
finite, CAT(0) square $2$-complex retracts to a point through subspaces
which remain, at each step of the retraction, CAT(0) spaces.

\begin{theorem}\label{3.27}
Let $K$ be a finite, CAT(0) square $2$-complex.
Then $K$ collapses to a point through CAT(0) subspaces $|K'|$.
\end{theorem}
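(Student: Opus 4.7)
The plan is to induct on the total number of cells of $K$, with each inductive step performing a single elementary collapse that preserves the CAT(0) property. The base case is a single vertex, which is vacuously CAT(0) and already collapsed. For the inductive step, if $K$ contains a $2$-cell then the first proposition of this section provides a $2$-cell $\sigma$ with a free $1$-dimensional face $e$, and I set $K' := K \setminus \{\sigma, e\}$. Since $K'$ has strictly fewer cells than $K$, it suffices to verify that $|K'|$ with the induced metric is again a CAT(0) space; the induction hypothesis then yields a collapsing sequence $K' \searrow \cdots \searrow \{\mathrm{pt}\}$ through CAT(0) subspaces, which prefixed by the elementary collapse $K \searrow K'$ gives the desired collapsing sequence for $K$.

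To verify that $|K'|$ is CAT(0), I combine three ingredients. First, Proposition~\ref{3.25} shows that every point of $|K'|$ has a CAT(0) neighborhood, so $|K'|$ is of curvature $\leq 0$. Second, $|K'|$ is a strong deformation retract of $|K|$, obtained by pushing the $2$-cell $\sigma$ in through its free edge $e$ onto the remaining three sides of its boundary; since $|K|$ is contractible as a CAT(0) space, so is $|K'|$, and in particular $|K'|$ is simply connected. Third, $|K'|$ inherits completeness from the finiteness of $K$. The Cartan--Hadamard-type theorem cited in the preliminaries, which states that any complete, simply connected metric space of non-positive curvature is CAT(0), then yields the conclusion.

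Finally, the case in which $K$ contains no $2$-cells is handled directly: such a $K$ is a finite CAT(0) graph, hence simply connected, hence a finite tree, which collapses to a point by iteratively removing leaves, each leaf providing an elementary collapse of a free $1$-cell together with its free $0$-dimensional endpoint and every intermediate subtree being again CAT(0). The genuinely geometric work is entirely concentrated in Proposition~\ref{3.25}; the main subtlety to be alert to in the induction is that the deformation-retract step correctly transfers simple connectedness from $|K|$ to $|K'|$, which is automatic because pushing a $2$-cell in along its free face is a strong deformation retraction onto the union of the three remaining boundary edges.
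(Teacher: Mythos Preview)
Your proof is correct and follows essentially the same route as the paper: both arguments locate a free face via Proposition~3.1, perform the elementary collapse, invoke Proposition~\ref{3.25} together with simple connectedness (inherited from the deformation retraction) and the Cartan--Hadamard theorem to conclude the result is again CAT(0), and handle the residual $1$-dimensional case as a tree. Your inductive framing on the number of cells is, if anything, cleaner than the paper's description in terms of a continuous retraction map $R$ and pushing in geodesic triangles, but the logical content is the same.
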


\begin{proof}

Proposition $3.1$ implies that $K$ has a $2$-cell with a free
$1$-dimensional face. We fix a point $p$ in the interior of a
$2$-cell of $K$. We define the map $R : |K| \times [0,1]
\rightarrow |K|$ which associates for any $x \in |K|$ and for any $t
\in [0,1]$, to $(x,t)$ the point a distance $t \cdot d(p,x)$ from
$x$ along the geodesic segment $[p,x]$. Because $|K|$ has a strongly
convex metric, the map $R$ is a continuous retraction of $|K|$ to
$p$. $R(|K| \times [0,1])$ is therefore contractible and then it is simply
connected. Let $a,b,c$ be any three distinct points in $R(|K| \times
[0,1])$ such that the unique geodesic segment $[b,c]$ belongs to a
$1$-cell that is the face of a single $2$-cell $\sigma$ in the complex.
Also the points $a,b,c$ are chosen such that the geodesic triangle $\delta = \triangle (a,b,c)$
is contained in $\sigma$.
For each such $\delta = \triangle (a,b,c)$, we deformation retract $R(|K|
\times [0,1])$ by pushing in $\delta$ starting at $[b,c]$. We obtain
each time a subspace $ |K'| = R(|K| \times [0,1])$ which remains
simply connected and, by Proposition \ref{3.25}, non-positively
curved. So $|K'|$ is a CAT(0) space implying that any two points in
$|K'|$ are joined by a unique geodesic segment in $|K'|$. If at a
certain step we delete the point $p$, we fix another point $p$ in the
interior of a $2$-cell of $K'$, define the map $R$ as before and
retract the space through CAT(0) subspaces further. Since $K$ is finite,
we reach, after a finite number of steps, a $1$-dimensional spine
$L$. Since $|L|$ is also a CAT(0) space, it is contractible. Taking
into account that a contractible $1$-complex is collapsible, the
result follows.

\end{proof}

As a consequence of the above result we have the following.

\begin{corollary}
Let $K$ be a locally finite, CAT(0) square $2$-complex.
Then $K$ has an arborescent structure.
\end{corollary}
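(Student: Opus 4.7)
My plan is to realize $K$ as a monotone union $\bigcup_{n \geq 1} L_{n}$ of finite CAT(0) subcomplexes; Theorem \ref{3.27} will then force each $L_{n}$ to be collapsible, which is precisely the definition of an arborescent structure.

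I would fix a vertex $v_{0} \in K^{(0)}$ and exploit the fact that, because $|K|$ is locally compact, complete, and CAT(0), the closed metric balls $B_{n} = \overline{B}(v_{0},n)$ are compact and convex. By local finiteness, only finitely many closed cells of $K$ meet $B_{n}$, so the union $L_{n}$ of those cells is a finite subcomplex; clearly $L_{n} \subseteq L_{n+1}$ and $\bigcup_{n \geq 1} L_{n} = K$. Non-positive curvature of $|L_{n}|$ is automatic: the link of every vertex of $L_{n}$ is a subgraph of its link in $K$ and therefore still has girth at least $2\pi$, so $|L_{n}|$ is locally CAT(0).

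The crux, and the main obstacle, is arranging that each $L_{n}$ be simply connected, so that the Cartan--Hadamard-type theorem recalled in the preliminaries promotes local CAT(0) to global CAT(0). To this end, for any loop $\gamma \subset |L_{n}|$ I would use the CAT(0) geodesic cone over $v_{0}$, which is a compact disc in $|K|$ contracting $\gamma$; compactness together with local finiteness guarantees that this disc meets only finitely many cells of $K$. Because $\pi_{1}(L_{n})$ is finitely generated, a finite collection of such discs suffices to kill $\pi_{1}(L_{n})$. Adjoining the cells met by these discs and then passing to a sufficiently rapidly growing subsequence of indices should yield an exhaustion by finite subcomplexes that are locally CAT(0) and simply connected, hence globally CAT(0). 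Making this enlargement procedure terminate cleanly, so that the resulting $L_{n}$ is itself simply connected rather than merely having its loops contracted in a still-larger subcomplex, is the delicate technical point; convexity of CAT(0) balls, local finiteness of $K$, and compactness of CAT(0) null-homotopies are the ingredients that must be combined with care.

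Once each enlarged $L_{n}$ is finite, simply connected, and locally CAT(0), Theorem \ref{3.27} shows that $L_{n}$ is collapsible, and $K = \bigcup_{n} L_{n}$ becomes a monotone union of collapsible subcomplexes, which by definition is an arborescent structure on $K$.
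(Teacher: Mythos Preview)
Your overall strategy matches the paper's: exhaust $K$ by finite CAT(0) subcomplexes and invoke Theorem~\ref{3.27}. The divergence is in which subcomplexes you choose, and this is where your argument has a genuine gap.

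You take $L_n$ to be the union of closed cells meeting the metric ball $\overline{B}(v_0,n)$ and correctly flag simple connectivity of $L_n$ as the obstacle. Your proposed fix --- adjoin the cells met by finitely many contracting discs for a generating set of $\pi_1(L_n)$, then pass to a rapidly growing subsequence --- does not close the gap. Adjoining those cells can create fresh $1$-cycles along their boundaries, so one round of enlargement need not yield a simply connected complex; iterating is not shown to stabilise; and passing to a subsequence only arranges that loops in $L_{n_k}$ die in $L_{n_{k+1}}$, not that any single $L_{n_k}$ is itself simply connected. You explicitly label this ``the delicate technical point'' and leave it unresolved, so the proof as written is incomplete.

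The paper avoids the difficulty by a different choice of exhaustion. It takes $B_n$ to be the \emph{full} subcomplex spanned by the vertices at edge-path distance at most $n$ from a fixed vertex $v$ --- a combinatorial ball in the $1$-skeleton --- and asserts that each $B_n$ is a ball in the ambient CAT(0) space, hence convex and contractible, hence already simply connected with no enlargement needed. The remainder (locally CAT(0) plus simply connected implies CAT(0), then Theorem~\ref{3.27}) runs exactly as in your outline. The point is that the paper's combinatorial balls come with simple connectivity built in, whereas your metric-ball neighbourhoods do not.
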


\begin{proof}
We fix a vertex $v$ of $K$. For each integer $n$, let $B_{n}$ be the full subcomplex of $K$ generated by the vertices that can be joined to $v$ by an edge-path of length at most $n$. Note that for each $n$, $B_{n}$ is a ball in a CAT(0) space. It is therefore contractible and, in particular, it is simply connected.

Furthermore, because $K$ is a CAT(0) space, it is locally a CAT(0) space. For each $n > 0$, $B_{n}$ is therefore itself locally a CAT(0) space. Since each $B_{n}, n > 0$ is simply connected and locally a CAT(0) space, it is a CAT(0) space. Hence for each $n$, $B_{n}$ is a finite square $2$-complex that is a CAT(0) space. So, according to Theorem \ref{3.27}, each such $B_{n}$ is collapsible.
Therefore $K$ is the monotone union $\cup_{n=1}^{\infty}B_{n}$ of a sequence of collapsible subcomplexes. This ensures that $K$ has an arborescent structure.

\end{proof}

We give below a few applications of the collapsibility of square $2$-complexes.

Due to the fact that median $2$-complexes are CAT(0) square $2$-complexes (see \cite{chepoi_2000}, Theorem $6.1$), the following holds.

\begin{corollary}
Median $2$-complexes are collapsible.
\end{corollary}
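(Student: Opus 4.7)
The plan is to combine the cited structural theorem of Chepoi with the main collapsibility result of the paper, so essentially no new work is needed. First I would invoke \cite{chepoi_2000}, Theorem $6.1$, to record that every median $2$-complex, endowed with its natural piecewise Euclidean square metric, is a CAT(0) square $2$-complex in the sense used throughout this paper. This identifies the hypothesis of the corollary with the hypothesis of Theorem \ref{3.27}.

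Next, assuming the median $2$-complex $K$ is finite, I would directly apply Theorem \ref{3.27}: since $|K|$ is a finite CAT(0) square $2$-complex, it retracts to a point through CAT(0) subspaces, and in particular $K$ is collapsible. If instead one wishes to interpret the statement in the locally finite setting, the preceding Corollary applies in exactly the same way: the full subcomplexes $B_{n}$ generated by combinatorial balls of radius $n$ are finite, simply connected, and locally CAT(0), hence CAT(0); by Theorem \ref{3.27} each $B_{n}$ is collapsible, and $K=\bigcup_{n\geq 1}B_{n}$ is a monotone union of collapsible subcomplexes, so $K$ has an arborescent structure.

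There is no real obstacle here beyond checking that the notion of ``median $2$-complex'' in \cite{chepoi_2000} coincides with the class of square $2$-complexes to which Theorem \ref{3.27} applies, i.e.\ that the $2$-cells are unit Euclidean squares glued along faces as in the Definition of cubical complex used in Section $2$. Once this is noted, the corollary is an immediate consequence of Theorem \ref{3.27} (respectively of its locally finite version) and requires only a one-line proof citing the two results.
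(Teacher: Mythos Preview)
Your proposal is correct and matches the paper's approach exactly: the paper simply records that median $2$-complexes are CAT(0) square $2$-complexes by \cite{chepoi_2000}, Theorem~$6.1$, and then invokes Theorem~\ref{3.27}. Your additional remarks on the locally finite case and on checking compatibility of definitions are reasonable elaborations, but the paper treats the corollary as a one-line consequence with no separate proof.
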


The collapsibility of CAT(0) square $2$-complexes implies that the first derived subdivision of a such complexes is also collapsible (\cite{welker_1999}, Theorem $2.10$). This happens because if
a complex is non-evasive (see \cite{welker_1999}), then it is collapsible (\cite{kahn_1984}, Proposition $1$). Hence the following result holds.

\begin{corollary}
CAT(0) square $2$-complexes admit collapsible triangulations.
\end{corollary}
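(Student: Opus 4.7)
The plan is to derive this from Theorem \ref{3.27} by invoking two imported results from the combinatorics of simplicial complexes. Let $K$ be a finite CAT($0$) square $2$-complex. By Theorem \ref{3.27}, $K$ is collapsible. Its first derived subdivision $\mathrm{sd}(K)$ is then a simplicial complex triangulating $|K|$, so it is a candidate triangulation in the sense of the statement.

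To promote $\mathrm{sd}(K)$ to a \emph{collapsible} triangulation, I would chain together the two black-box results already foreshadowed in the paragraph preceding the corollary. First, Theorem $2.10$ of \cite{welker_1999} asserts that the derived subdivision of any collapsible complex is non-evasive; applied to $K$, it yields that $\mathrm{sd}(K)$ is non-evasive. Second, Proposition $1$ of \cite{kahn_1984} states that every non-evasive simplicial complex is collapsible; applied to $\mathrm{sd}(K)$, it yields that $\mathrm{sd}(K)$ is collapsible. Combining these with the first step furnishes a collapsible triangulation of $|K|$, as required.

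There is no genuine obstacle here: the argument is essentially a packaging step, chaining Theorem \ref{3.27} with the results of Welker and Kahn--Saks--Sturtevant. The only sanity check is that the hypothesis of Welker's theorem (collapsibility of the input complex) is legitimately met — which is precisely the content of Theorem \ref{3.27}, interpreted via the fact that the cubical elementary collapses provided by that theorem refine to simplicial elementary collapses after passing to $\mathrm{sd}(K)$.
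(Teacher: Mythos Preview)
Your proposal is correct and follows essentially the same route as the paper: invoke Theorem~\ref{3.27} to obtain collapsibility of $K$, apply Welker's Theorem~2.10 to deduce that the first derived subdivision $\mathrm{sd}(K)$ is non-evasive, and then use Kahn--Saks--Sturtevant's Proposition~1 to conclude that $\mathrm{sd}(K)$ is collapsible. Your added remark that the cubical elementary collapses refine to simplicial ones in $\mathrm{sd}(K)$ is a welcome clarification the paper leaves implicit.
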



\begin{thebibliography}{11}

\bibitem{benedetti_2019}
K. A. Adiprasito, B. Benedetti, {\it Collapsibility of CAT(0) spaces}, arXiv:$1107.5789v8$, $2019$.

\bibitem{alex_1955}
A.-D. Alexandrov, {\it Die innere Geometrie der konvexen Flaechen}, Akademie Verlag, Berlin, $1955$.

\bibitem{BCCGO}
B. Bre{\v{s}}ar, J. Chalopin, V. Chepoi, T. Gologranc, D. Osajda, {\it Bucolic complexes}, Adv. Math., $243$, $2013$, $127-167$.

\bibitem{bridson_1999}
M. Bridson, A. Haefliger, {\it Metric spaces of non-positive
curvature}, Springer, New York, $1999$.

\bibitem{burago_2001}
D. Burago, Y. Burago, S. Ivanov, {\it A Course in Metric Geometry}, American Mathematical Society,
Providence, Rhode Island, $2001$.

\bibitem{ChaCHO}
J. Chalopin, V. Chepoi, H. Hirai, D. Osajda, {\it Weakly modular graphs and nonpositive curvature}, Mem. Amer. Math. Soc., to appear.

\bibitem{chepoi_2000}
V. Chepoi, {\it Graphs of some CAT(0) complexes}, Adv. in Appl. Math. $24$, $2$, $2000$, $125-179$.

\bibitem{chepoi_2009}
V. Chepoi, D. Osajda, {\it Dismantlability of weakly systolic complexes and applications}, Trans. Amer. Math. Soc. $367$, $2$, $2015$, $1247 - 1272$.

\bibitem{corson_1998}
J.-M. Corson, B. Trace, {\it The $6$-property for simplicial
complexes and a combinatorial Cartan-Hadamard theorem for
manifolds}, Proceedings of the American Mathematical Society $126$, $3$, $1998$, $917-924$.

\bibitem{crisp_2002}
J. Crisp, {\it On the CAT($0$) dimension of $2$-dimensional Bestvina-Brady groups}, Algebraic and Geometric Topology $2$, $2002$, $921-936$.

\bibitem{crowley_2008}
K. Crowley, {\it Discrete Morse theory and the geometry of
nonpositively curved simplicial complexes}, Geometriae Dedicata $133$, $1$, $2008$, $35 - 50$.

\bibitem{forman_2002}
R. Forman, {\it A user's guide to discrete Morse theory}, Sem. Lothar. Combin. $48$, $2002$, Art. B$48$c, $35$.

\bibitem{forman_1998}
R. Forman, {\it Morse theory for cell complexes}, Adv. Math.
$134$, $1$, $1998$, $90 - 145$.

\bibitem{Hag}
F. Haglund, {\it Complexes simpliciaux hyperboliques de grande dimension}, preprint, Prepublication Orsay,
$71$, $2003$, avalable at http://www.math.u-psud.fr/~haglund/cpl\_hyp\_gde\_dim.pdf

\bibitem{JS0}
T. Januszkiewicz, J. {\'S}wi{\c{a}}tkowski, {\it Hyperbolic Coxeter groups of large dimension},
Comment. Math. Helv, $78$, $3$, $2003$, $555-583$.

\bibitem{JS1}
T. Januszkiewicz, J. {\'S}wi{\c{a}}tkowski, {\it Simplicial nonpositive curvature},
Publ. Math. IHES, $2006$, $1 - 85$.

\bibitem{JS2}
T. Januszkiewicz, J. {\'S}wi{\c{a}}tkowski, {\it Filling invariants of systolic complexes and groups},
Geom. Topol., $11$, $2007$, $727-758$.

\bibitem{kahn_1984}
J. Kahn, M. Saks, D. Sturtevant, {\it A topological approach to evasiveness}, Combinatorica $4$, $1984$, $297 - 306$.

\bibitem{lazar_2010_8}
I.-C. Laz\u{a}r, {\it The study of simplicial complexes of nonpositive curvature}, Ph.D. thesis,
Cluj University Press, $2010$ (http://www.ioana-lazar.ro/phd.html).

\bibitem{lazar_2012}
I.-C. Laz\u{a}r, {\it Discrete Morse theory, simplicial nonpositive curvature, and simplicial collapsibility}, Balkan Jour. Geom. Appl. $17$, $1$ $2012$, $58 - 69$.

\bibitem{lazar_2013}
I.-C. Laz\u{a}r, {\it Systolic simplicial complexes are collapsible}, Bull. Math. Soc. Sci. Math. Roumanie
Tome $56(104)$, $2$, $2013$, $229-236$.

\bibitem{L-8loc}
I.-C. Laz\u{a}r, {\it A combinatorial negative curvature condition implying Gromov hyperbolicity}, $2015$, available at https://arxiv.org/pdf/1501.05487.pdf.

\bibitem{lazar_2015}
I.-C. Laz\u{a}r, {\it Minimal disc diagrams of $5/9$-simplicial complexes}, to appear in Michigan Math. J., $2019$, available at arXiv:$1509.03760$.

\bibitem{menger_1928}
K. Menger, {\it Untersuchungen ueber allgemeine Metrik}, Math. Ann., $100$, $1928$, $75-163$.

\bibitem{O-conn}
D. Osajda, {\it Connectedness at infinity of systolic complexes and groups}, Groups Geom. Dyn., $1$, $2$,
$2007$, $183-203$.

\bibitem{O-ib}
D. Osajda, {\it Ideal boundary of 7-systolic complexes and groups}, Algebr. Geom. Topol., $8$, $1$,
$2008$, $81-99$.

\bibitem{O-chcg}
D. Osajda, {\it A construction of hyperbolic Coxeter groups}, Comment. Math. Helv., $88$, $2$,
$2013$, $353-367$.

\bibitem{O-sdn}
D. Osajda, {\it A combinatorial non-positive curvature I: weak systolicity}, preprint,
$2013$, available at arXiv:$1305.4661$

\bibitem{O-8loc}
D. Osajda, {\it Combinatorial negative curvature and triangulations of three-manifolds}, Indiana Univ. Math. J., $64$, $3$,
$2015$, $943-956$.

\bibitem{O-ns}
D. Osajda, {\it Normal subgroups of SimpHAtic groups}, submitted, $2015$, available at arXiv:$1501.00951$

\bibitem{OS}
D. Osajda, J. {\'S}wi{\c{a}}tkowski, {\it On asymptotically hereditarily aspherical groups}, Proc. London Math. Soc., $111$, $1$,
$2015$, $93-126$.

\bibitem{welker_1999}
V. Welker, {\it Constructions preserving evasiveness and collapsibility}, Discrete Math., $207$,
$1999$, $243 - 255$.

\bibitem{white_1967}
W. White, {\it A 2-complex is collapsible if and only if it admits a strongly convex metric}, Notices Amer. Math. Soc., $14$, $1967$, $24 - 28$.

\bibitem{zeeman_1964}
C. E. Zeeman, {\it On the Dunce Hat}, Topology, $2$, $1964$, $341-358$.

\end{thebibliography}
\end{document}